\newtheorem{theorem}{Theorem}
\newtheorem{lemma}{Lemma}
\newtheorem{proposition}[theorem]{Proposition}
\newenvironment{proof}[1][Proof]{\noindent\textbf{#1.} }{\ \rule{0.5em}{0.5em}}
\begin{document}
\title{\textbf{Boltzmann Equation\\ with a Large Potential in a Periodic Box}}
\author{\large{\textbf{Chanwoo Kim}}\\ \small{Department of Mathematics, Brown University, Providence, RI 02917, USA.
E-mail : ckim@math.brown.edu}}
\maketitle

\begin{abstract}
The stability of the Maxwellian of the Boltzmann equation with a large amplitude external potential $\Phi$ has been an important open problem. In this paper, we resolve this problem with a large $C^3-$potential in a periodic box $\mathbb{T}^d$, $d\geq 3$. We use \cite{Asano94} in $L^p-L^{\infty}$ framework to establish the well-posedness and the $L^{\infty}-$stability of the Maxwellian $\mu_E (x,v) = \exp \left\{-\frac{|v|^2}{2}-\Phi(x)\right\}$.
\end{abstract}

\maketitle
\section{Introduction}
In the presence of a potential $\Phi$, a density of a dilute charged gas is governed by the Boltzmann equation
\begin{equation}
\partial_t F + v \cdot\nabla_x F -\nabla_x \Phi(x)\cdot\nabla_v F = Q(F,F) \ \ , \ \ \ \ \ F(0,x,v) = F_0 (x,v),\label{Boltzmann}
\end{equation}
where $F(t,x,v)$ is a distribution function for the gas particles at a time $t\geq 0,$ a position $x\in \mathbb{T}^d \ $and a velocity $v\in \mathbb{R}^{d}$ for $d\geq 3$. Here, the external potential $\Phi(x)$ is a given function only depends on the spatial variable $x$ in a periodic box $\mathbb{T}^d$. The collision operator $Q$ takes the form%
\begin{eqnarray}
Q(F_{1},F_{2})
&=&\int_{\mathbb{R}^{d}}\int_{\mathbb{S}^{d-1}} B(v-u,\omega)
F_{1}(u^{\prime })F_{2}(v^{\prime })d\omega du
-\int_{\mathbb{R}^{d}}\int_{\mathbb{S}^{d-1}} B(v-u,\omega) F_{1}(u)F_{2}(v)d\omega du   \label{qgl}\\
&\equiv &Q_{+}(F_{1},F_{2})-Q_{-}(F_{1},F_{2}) \ ,
\notag
\end{eqnarray}%
where $u^{\prime }=u+[(v-u)\cdot \omega ]\omega ,$ $v^{\prime
}=v-[(v-u)\cdot \omega ]\omega $ and
$
B(v-u,\omega) = |v-u|^{\gamma} q_0 (\frac{v-u}{|v-u|}\cdot \omega),
$
with $0< \gamma \leq 1$ (hard potential) and
$
\int_{\mathbb{S}^{d-1}} q_0 (\hat{u}\cdot \omega) d\omega < +\infty \ \text{(angular cutoff)}\label{Gradcutoff}
$
for all $\hat{u}\in\mathbb{S}^{d-1}$.

Throughout the paper, we study the stability of a local Maxwellian for given potential $\Phi$ :
\begin{equation}
\mu_E(x,v) \ = \ \exp\left\{ -\frac{|v|^2}{2} - \Phi (x)
\right\} \ = \ \mu(v) e^{-\Phi (x)} \ , \label{lambda}
\end{equation}
where $\mu(v)= \exp\left\{ -\frac{|v|^2}{2} \right\}$ is the standard global Maxwellian in the no potential case, $\Phi \equiv 0$ (\cite{GuoSoft}).
Define a perturbation distribution $f=f(t,x,v)$ by
\begin{equation}
F(t,x,v) = \mu_E(x,v) + \sqrt{\mu_E(x,v)}f(t,x,v).\label{perturb}
\end{equation}
Then the equation for the perturbation $f$ is
\begin{equation}
\partial_t f + v\cdot \nabla_x f -\nabla\Phi\cdot\nabla_v f + e^{-\Phi(x)}Lf = e^{-\frac{1}{2}\Phi(x)}\Gamma(f,f) \ \ , \ \ \ \ \ f(0,x,v) = f_0 (x,v)= \frac{F_0-\mu_E}{\sqrt{\mu_E}} \ ,\label{linearBE}
\end{equation}
where the standard operators of the linearized Boltzmann theory (\cite{GuoVPB}) are
\begin{equation*}
Lf \equiv \nu f - Kf = -\frac{1}{\sqrt{\mu}}\{ Q(\mu,\sqrt{\mu}f) + Q(\sqrt{\mu}f,\mu)\} =\nu f - \int \mathbf{k}(v,v^{\prime}) f(v^{\prime}) dv^{\prime},
\end{equation*}
with the collision frequency $\nu(v)\equiv \int |v-u|^{\gamma} \mu(u) q_0 du d\omega \sim \{1+|v|\}^{\gamma}$ for $0< \gamma \leq 1$ ; and
\begin{equation*}
\Gamma(f_1,f_2) = \frac{1}{\sqrt{\mu}} Q(\sqrt{\mu}f_1, \sqrt{\mu}f_2) \equiv \Gamma_{+}(f_1,f_2)-\Gamma_{-}(f_1,f_2).
\end{equation*}

\subsection{External Potential and Conservation of Momentum}
Let $F$ be a solution of the Boltzmann equation (\ref{Boltzmann}) with an external potential $\Phi$. As the no potential case ($\Phi\equiv 0$), we have the (excess) conservations of mass and energy :
\begin{eqnarray}
\iint_{\mathbb{T}^d \times \mathbb{R}^d} \{F(t,x,v)-\mu_E\} dv dx &=& \iint_{\mathbb{T}^d \times \mathbb{R}^d} \{F_0(x,v)-\mu_E\} dv dx \ \equiv \ M_0,\label{masscons}\\
\iint_{\mathbb{T}^d \times \mathbb{R}^d} (\frac{|v|^2}{2} + \Phi(x) )\{F(t,x,v)-\mu_E\} dv dx &=& \iint_{\mathbb{T}^d \times \mathbb{R}^d} (\frac{|v|^2}{2} + \Phi(x) )\{F_0(x,v)-\mu_E\} dv dx \ \equiv \ E_0,\label{energycons}
\end{eqnarray}
as well as the excess entropy inequality :
\begin{equation}
\mathcal{H}(F(t))-\mathcal{H}(\mu_E) \leq \mathcal{H}(F_0) -\mathcal{H}(\mu_E),\label{entropy}
\end{equation}
where $\mathcal{H}(g)\equiv \iint g \ \ln g \ dv dx$.

However, in the presence of an external potential $\Phi$, the momentum conservation law is delicate. In general, the momentum is not conserved : multiplying the Boltzmann equation (\ref{Boltzmann}) by $v_i$ and integrating over $\mathbb{T}^d \times\mathbb{R}^d$, we have
\begin{eqnarray*}
\frac{d}{dt}\iint v_i F(t) dx dv - \iint v_i \nabla_x \Phi(x) \cdot \nabla_v F (t) dx dv =0.
\end{eqnarray*}
Applying the integration by part to the second term, we have
\begin{eqnarray*}
\frac{d}{dt}\iint v_i F(t) dx dv + \iint \partial_i \Phi(x) F (t) dx dv =0.
\end{eqnarray*}
If the potential $\Phi$ does not depend on $x_i$, then the second integration of the above equation vanishes. Therefore we have the conservation of momentum for $v_i$. Otherwise, in general, we do not have such a conservation law of momentum.

More precisely, define a map
\begin{equation*}
\Lambda : \mathbb{T}^d \ \rightarrow \ \{ \text{Linear Subspaces of }\mathbb{R}^d \} \ \ , \ \ \ \Lambda(x)=\text{span}\{\nabla\Phi(x)\} \equiv \{v\in\mathbb{R}^d : v=\tau \nabla\Phi(x) \ \ , \tau\in\mathbb{R}\}.
\end{equation*}
Define
\begin{equation}
\Lambda(\mathbb{T}^d) \ \equiv \ \bigcup_{x\in\mathbb{T}^d} \Lambda(x) \ ,
\end{equation}
which is a linear subspace of $\mathbb{R}^d$. Further we can decompose $\mathbb{R}^d = \Lambda(\mathbb{T}^d)\bigoplus \Lambda(\mathbb{T}^d)^{\bot}$. More precisely we define a \textbf{degenerate subspace} of $\nabla\Phi$ by
\begin{equation}
\Lambda(\mathbb{T}^d)^{\bot} \ \equiv \ \bigcap_{x\in\mathbb{T}^d} \Lambda(x)^{\bot},\label{v1tovn}
\end{equation}
which is a linear subspace of $\mathbb{R}^d$. Let $n= dim \ \Lambda(\mathbb{T}^d)^{\bot}$. Notice that generically the degenerate subspace of $\nabla\Phi$ is a zero space $\{0\}$ and $n= dim \ \Lambda(\mathbb{T}^d)^{\bot}=0$. Upon relabeling and reorienting the coordinates axes, we may assume that $\Lambda(\mathbb{T}^d)^{\bot}$ is spanned by $\{e_1,...,e_n\}$, i.e.
\begin{equation}
 \Lambda(\mathbb{T}^d)^{\bot}= \text{span}\{e_1,...,e_n\} \ \ , \ \ \ \ \ \ \ n= dim \ \Lambda(\mathbb{T}^d)^{\bot}.\label{n}
\end{equation}
If $\Phi$ is differentiable then $\partial_{x_1}\Phi = \cdots = \partial_{x_n}\Phi \equiv 0 $ and $\Phi= \Phi(x_{n+1},\cdots, x_d)$. Further we assume $\Phi \in C^3(\mathbb{T}^d)$ and satisfies the periodic boundary condition in $\mathbb{T}^d$ and $1\leq \Phi(x) < |\Phi|_{\infty}$. Then we have the (excess) conservation of momentum for degenerate $\{v_1,...,v_n\}$ :
\begin{eqnarray}
\iint_{\mathbb{T}^d \times \mathbb{R}^d} \{F(t,x,v)-\mu\} (v_1,\ldots v_n)^{T} \ dv dx  \ = \
\iint_{\mathbb{T}^d \times \mathbb{R}^d} \{F_0(x,v)-\mu\} (v_1,\ldots v_n)^{T} \ dv dx  \ = \
 \mathbf{J}_0\in \mathbb{R}^n.\label{momentumcons}
\end{eqnarray}
Notice that generically $ \ \Lambda(\mathbb{T}^d)^{\perp}$ ,the degenerate subspace of $\nabla\Phi$, is a zero space $\{0\}$ and $n= dim \ \Lambda(\mathbb{T}^d)^{\bot}=0$ so that we do not have such a momentum conservation law as (\ref{momentumcons}).

It is important to point out that the momentum conservation law (\ref{momentumcons}) is necessary in order to get decay (\ref{nondecay}) in Theorem 1. In particular the condition (\ref{momentumcons}) is used in (\ref{cruciall}) in order to show the crucial positivity of $L$ in (\ref{positivity}). Without the condition (\ref{momentumcons}), we have the stability result (\ref{stability}) in Theorem 2 but not a decay.

\subsection{Main Result}
We introduce the weight function for $\beta>d/2$,
\begin{equation}
w(x,v) = \left\{ \frac{|v|^2}{2} + \Phi(x)\right\}^{\beta/2}.\label{weight}
\end{equation}
\begin{theorem}
Assume that an external potential $\Phi$ is a periodic $C^3$-function on $\mathbb{T}^d$ and $\Phi=\Phi(x_{n+1},\cdots,x_d)$ for some $n\leq d$. Assume the conservations of mass (\ref{masscons}), energy (\ref{energycons}) and momentum for degenerate $\{v_1,\cdots,v_n\}$ (\ref{momentumcons}) are valid for $F_0 = \mu_E+\sqrt{\mu_E}f_0$ with
\begin{equation}
( \ M_0, \ E_0, \ \mathbf{J}_0 \ ) = ( \ 0 \ , \ 0 \ , \ \mathbf{0} \ ) \in \mathbb{R}\times\mathbb{R}\times\mathbb{R}^n. \label{conlaw}
\end{equation}
Then there exists $\delta>0$ such that if $F_0(x,v)=\mu_E + \sqrt{\mu_E}f_0(x,v)$ and $||wf_0||_{\infty}\leq \delta$, there exists a unique solution $F(t,x,v)=\mu_E+\sqrt{\mu_E}f(t,x,v)\geq 0$ for the Boltzmann equation (\ref{Boltzmann}) such that
\begin{equation}
\sup_{0\leq t\leq \infty} e^{\lambda t}||wf(t)||_{\infty} \leq C ||wf_0||_{\infty}.\label{nondecay}
\end{equation}
\end{theorem}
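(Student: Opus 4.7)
The plan is to implement the $L^p$-$L^\infty$ program suggested in the abstract: first establish exponential $L^2$ decay for the linearization of (\ref{linearBE}) by exploiting the conservation laws (\ref{conlaw}) and the positivity of $L$, then upgrade this to weighted $L^\infty$ decay by integrating along characteristics and iterating Duhamel's formula in the spirit of \cite{Asano94}, and finally close the full nonlinear problem by a contraction argument using the smallness $\|wf_0\|_\infty\leq \delta$.

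\textbf{Hamiltonian characteristics and the weight.} A crucial preliminary observation is that the transport part $v\cdot\nabla_x-\nabla\Phi\cdot\nabla_v$ is the Hamiltonian vector field of $H(x,v)=|v|^2/2+\Phi(x)$, so along the characteristic ODE $\dot X=V,\ \dot V=-\nabla\Phi(X)$ the energy $H$ is conserved. Since $\Phi\in C^3$, solutions of this ODE exist globally and depend smoothly on data, and the weight $w=H^{\beta/2}$ is invariant under the flow. This means that in Duhamel estimates the weight behaves like a constant along trajectories; only the collision kernel $K$ requires a nontrivial estimate on $wKw^{-1}$, which is uniform in $x$ because $1\leq\Phi\leq|\Phi|_\infty$.

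\textbf{Linear $L^2$ decay.} For the linear equation $\partial_t f+v\cdot\nabla_x f-\nabla\Phi\cdot\nabla_v f+e^{-\Phi}Lf=0$ the standard energy identity reads
\begin{equation*}
\frac{1}{2}\frac{d}{dt}\|f\|_{L^2}^2 + \langle e^{-\Phi}Lf,f\rangle \;=\; 0,
\end{equation*}
so it suffices to prove the coercivity $\langle e^{-\Phi}Lf,f\rangle \geq \delta_0 \|f\|_{L^2}^2$. Writing $f=\mathbf{P}f+(\mathbf{I}-\mathbf{P})f$ for the hydrodynamic decomposition with respect to $\ker L = \sqrt{\mu}\cdot\mathrm{span}\{1,v_1,\dots,v_d,|v|^2\}$, the coercivity on $(\mathbf{I}-\mathbf{P})f$ is standard. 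For the macroscopic part the transport equation together with (\ref{conlaw}) forces $\mathbf{P}f\equiv 0$: the conservations of mass and energy kill the scalar and energy modes, the degenerate momentum conservation (\ref{momentumcons}) kills the modes $v_i\sqrt{\mu_E}$ for $i=1,\dots,n$, and for $i>n$ the non-vanishing of $\partial_{x_i}\Phi$ somewhere in $\mathbb{T}^d$ combined with the transport equation eliminates the remaining momentum modes. This yields $\|f(t)\|_{L^2}\leq e^{-\lambda_0 t}\|f_0\|_{L^2}$.

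\textbf{$L^\infty$ bound and nonlinear closure.} To pass from $L^2$ to weighted $L^\infty$, I would iterate Duhamel's formula twice along the Hamiltonian trajectories. The first Duhamel gives
\begin{equation*}
(wf)(t,x,v) \;=\; (wf_0)\bigl(X(-t),V(-t)\bigr) e^{-\int_0^t e^{-\Phi}\nu\,d\tau} + \int_0^t e^{-\int_s^t e^{-\Phi}\nu\,d\tau}(w K f)\bigl(X(s-t),V(s-t),s\bigr)\,ds + \text{nonlinear},
\end{equation*}
and substituting once more into the $Kf$ term produces a double integral in which a change of variables $v'\mapsto y$ converts the $K$-kernel bound into an $L^2$ norm of $f$. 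Combining with the linear $L^2$ decay yields $\|wf(t)\|_\infty\leq Ce^{-\lambda t}\|wf_0\|_\infty$ for the linear problem. The nonlinear term $e^{-\Phi/2}\Gamma(f,f)$ is then absorbed via the standard bound $|w\Gamma(f,f)|\leq C\nu|wf|_\infty^2$, and a contraction in $\{f:\sup_t e^{\lambda t}\|wf(t)\|_\infty\leq C\delta\}$ closes the argument. I expect the main obstacle to be precisely the change-of-variables step along Hamiltonian characteristics: unlike the free-transport case, trajectories curve, recur, and may graze level sets of $\Phi$, so one must verify non-degeneracy of the associated Jacobian uniformly in time. This is the step for which the $C^3$ hypothesis on $\Phi$ and the Asano-type construction \cite{Asano94} are essential.
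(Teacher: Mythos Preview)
Your $L^\infty$ outline is on the right track and matches the paper's approach: double Duhamel along Hamiltonian trajectories, change of variables $v'\mapsto X(s_1;s,X(s),v')$, and Asano's result \cite{Asano94} to control the set where the Jacobian vanishes. Good.

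The genuine gap is in your linear $L^2$ step. The pointwise-in-time coercivity $\langle e^{-\Phi}Lf,f\rangle\geq\delta_0\|f\|_{L^2}^2$ is simply false: $L$ has a $(d+2)$-dimensional kernel, and nothing prevents $f(t,\cdot,\cdot)$ from having a nonzero hydrodynamic part $\mathbf{P}f$ at a given instant. Your claim that the conservation laws ``force $\mathbf{P}f\equiv 0$'' confuses integrated constraints with pointwise ones. Mass conservation, for example, only says $\iint f\sqrt{\mu_E}\,dv\,dx=0$; it does not say $\int f\sqrt{\mu_E}\,dv=0$ for each $x$, so the coefficient $a(t,x)$ in $\mathbf{P}f=\{a+v\cdot\mathbf b+|v|^2c\}\sqrt{\mu_E}$ can be nonzero. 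Likewise, the non-degenerate momentum modes $b_{n+1},\dots,b_d$ are not killed by any conservation law at all.

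What the paper actually proves is the \emph{time-integrated} positivity
\[
\int_0^1 \langle Lf(s),f(s)\rangle\,ds \;\geq\; M\int_0^1 \|f(s)\|_\nu^2\,ds
\]
for solutions of the linear equation satisfying (\ref{conlaw}), and this is obtained by a contradiction/compactness argument rather than a direct coercivity bound. One normalizes a failing sequence $Z_k$, passes to a weak limit $Z$ (using a velocity-averaging lemma to get strong compactness of $KZ_k$), and shows that $Z$ is nonzero, purely hydrodynamic, and solves the free transport equation. Expanding in the collision invariants yields the macroscopic system $\nabla_x c=0$, $\partial_ib_j+\partial_jb_i=0$ ($i\neq j$), $\dot c+\partial_ib_i=0$, etc., and it is only after solving this system on $\mathbb{T}^d$---using periodicity to force $\Delta b_i=0\Rightarrow\nabla b_i=0$, hence $\mathbf b=\mathbf b(t)$, then $c=c_0$ constant, then $\dot{\mathbf b}=0$, then $\mathbf b_0\cdot\nabla\Phi\equiv 0$ so $\mathbf b_0\in\Lambda(\mathbb{T}^d)^\perp$---that the conservation laws finally enter to kill the remaining \emph{constants} $\varphi_0,c_0,\mathbf b_0$. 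In particular, the role of ``$\partial_{x_i}\Phi\neq 0$ for $i>n$'' is not to eliminate $b_i$ directly but to force $(\mathbf b_0)_i=0$ via $\mathbf b_0\perp\nabla\Phi(x)$ for all $x$; the degenerate components $(\mathbf b_0)_1,\dots,(\mathbf b_0)_n$ are then killed by (\ref{momentumcons}). Without this structure your $L^2$ decay argument does not close.
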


Without the conservation of momentum for degenerate $\{v_1,\cdots,v_n\}$ (\ref{momentumcons}), we are not able to prove the $L^{\infty}-$decay (\ref{nondecay}). The reason is that such a momentum conservation law is a crucial to show the positivity of $L$ in Proposition 4. However we have the $L^{\infty}-$stability using the natural excess entropy inequality (\ref{entropy}).

\begin{theorem}
Assume that an external potential $\Phi$ is a periodic $C^3$-function on $\mathbb{T}^d$. Assume the excess conservation of mass (\ref{masscons}), energy (\ref{energycons}) and the excess entropy inequality (\ref{entropy}) are valid for $F_0 = \mu_E+\sqrt{\mu_E}f_0$ with
\begin{equation}
|M_0| , \ |E_0| , \ |\mathcal{H}(F_0)-\mathcal{H}(\mu_E)| \ < \infty.
\end{equation}
Then there exists $\delta>0$ such that if $F_0(x,v)=\mu_E + \sqrt{\mu_E}f_0(x,v)$ and
\begin{equation}
||wf_0||_{\infty} + \sqrt{\mathcal{H}(F_0)-\mathcal{H}(\mu_E) + |M_0| + |E_0|} \ \leq \ \delta \ ,
\end{equation}
then there exists a unique solution $F(t,x,v)=\mu_E+\sqrt{\mu_E}f(t,x,v)\geq 0$ for the Boltzmann equation (\ref{Boltzmann}) such that
\begin{equation}
\sup_{0\leq t\leq \infty}||wf(t)||_{\infty} \ \leq \ C \left\{ \ ||wf_0||_{\infty} + \sqrt{\mathcal{H}(F_0)-\mathcal{H}(\mu_E) + |M_0| + |E_0|} \ \right\} \ .\label{stability}
\end{equation}
\end{theorem}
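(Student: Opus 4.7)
The plan is to follow the same $L^p$-$L^\infty$ Asano framework used for Theorem 1 applied to the perturbation equation (\ref{linearBE}), but to replace the dissipation argument of Proposition 4 (which in Theorem 1 required all three conservation laws including the degenerate-momentum law (\ref{momentumcons})) by an a priori $L^2$ bound derived directly from the entropy inequality (\ref{entropy}). Without a momentum-type coercivity one cannot reproduce the spectral gap of $L$, which is exactly why decay is lost and only stability can be expected here.

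First I would construct a local-in-time $L^\infty$ solution via the same iteration scheme used for Theorem 1. Since the characteristics of $v\cdot\nabla_x - \nabla\Phi\cdot\nabla_v$ preserve $\frac{|v|^2}{2}+\Phi(x)$, the weight $w$ in (\ref{weight}) is invariant along trajectories, which allows closing a short-time weighted bound and maintains $F\geq 0$. The central new ingredient is a uniform-in-time $L^2$ estimate. A Taylor expansion of $x\mapsto x\ln x$ around $\mu_E$, together with the identity $1+\ln\mu_E = 1-\frac{|v|^2}{2}-\Phi(x)$, shows that for $\|wf\|_\infty$ sufficiently small,
\begin{equation*}
\mathcal{H}(F) - \mathcal{H}(\mu_E) \;\geq\; c\|f\|_{L^2(\mathbb{T}^d\times\mathbb{R}^d)}^2 \;-\; C\bigl(|M_0|+|E_0|\bigr),
\end{equation*}
where the linear correction in the expansion is absorbed via the excess mass and energy conservations (\ref{masscons})--(\ref{energycons}). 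Combining this with the entropy inequality (\ref{entropy}), which propagates the right-hand side at time $t$ to the initial datum, yields
\begin{equation*}
\sup_{t\geq 0}\|f(t)\|_{L^2}^2 \;\leq\; C\bigl(\mathcal{H}(F_0)-\mathcal{H}(\mu_E) + |M_0| + |E_0|\bigr).
\end{equation*}

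Armed with this bound, I would close the argument through the standard $L^2\to L^\infty$ bootstrap along characteristics. Iterating the Duhamel representation of (\ref{linearBE}) twice produces the usual double integral whose kernel $\mathbf{k}$ is integrable against an $L^2$ density (the Vidav/Guo trick, now adapted to the Hamiltonian flow generated by $\Phi$ instead of the free transport), so the linear part of $\|wf(t)\|_\infty$ is controlled by the uniform $L^2$ bound, while the nonlinear $\Gamma$ term is quadratic in $\|wf\|_\infty$ and absorbable for $\delta$ small. This yields
\begin{equation*}
\sup_{0\leq t\leq T}\|wf(t)\|_\infty \;\leq\; C\|wf_0\|_\infty + C\sqrt{\mathcal{H}(F_0)-\mathcal{H}(\mu_E) + |M_0| + |E_0|},
\end{equation*}
uniformly in $T$, and a standard continuation argument extends the local solution to all $t\geq 0$, yielding (\ref{stability}).

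The main obstacle I anticipate is arranging the entropy-based coercivity to control precisely the $L^2$ norm consumed by the $L^2\to L^\infty$ iteration. The key algebraic point is that the linear correction in the entropy Taylor expansion pairs $F-\mu_E$ against $1+\ln\mu_E = 1-\frac{|v|^2}{2}-\Phi(x)$, which is a linear combination of mass and energy densities only, so it is controlled by $|M_0|+|E_0|$ without any momentum information --- this is exactly why (\ref{momentumcons}) is not needed and no momentum-type quantity enters (\ref{stability}). A secondary delicate point is maintaining positivity $F\geq 0$ and the invariance of $w$ through the nonlinear iteration, both of which ultimately rely on $\frac{|v|^2}{2}+\Phi(x)$ being a conserved quantity of the underlying Hamiltonian transport.
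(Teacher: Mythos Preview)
Your overall template---double Duhamel/Vidav iteration of (\ref{linearBE-h}) along the Hamiltonian characteristics, with an entropy-based a priori bound replacing the $L^2$-decay of Section~3---is the same as the paper's Section~5. However, the central claim
\[
\mathcal{H}(F)-\mathcal{H}(\mu_E)\;\ge\;c\,\|f\|_{L^2(\mathbb{T}^d\times\mathbb{R}^d)}^2-C\bigl(|M_0|+|E_0|\bigr)
\]
does not hold. The Taylor remainder in the expansion of $F\ln F$ around $\mu_E$ is $(F-\mu_E)^2/(2\tilde F)$ with $\tilde F$ between $F$ and $\mu_E$; dominating $f^2=(F-\mu_E)^2/\mu_E$ would require $\tilde F\le C\mu_E$, hence $F\le C\mu_E$. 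With only the polynomial weight $w=(\tfrac12|v|^2+\Phi)^{\beta/2}$, the bound $|f|\le\|wf\|_\infty/w$ does \emph{not} force $|F-\mu_E|\le C\mu_E$ for large $|v|$, because $w\sqrt{\mu_E}\to 0$. The entropy therefore only yields hybrid control---$L^2$ where $F$ is close to $\mu_E$ and $L^1$ where it is far---and a clean global $L^2$ bound on $f$ is not available, no matter how small $\|wf\|_\infty$ is.

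The paper closes the argument through an $L^1$--$L^\infty$ mechanism instead. The key input is Lemma~4 (from \cite{Guo_short}), which extracts from (\ref{entropy}), (\ref{masscons}), (\ref{energycons}) the $L^1$ bound
\[
\iint |F(t)-\mu_E|\,\mathbf{1}_{|F(t)-\mu_E|\ge\delta\mu_E}\;\le\;\frac{4}{\delta}\bigl\{\mathcal{H}(F_0)-\mathcal{H}(\mu_E)+|M_0|+|E_0|\bigr\},
\]
valid for every $0<\delta<1$. In CASE~3b of the double Duhamel, after Asano's change of variables $v'\mapsto X(\cdot)$, the resulting spatial integral of $|h|$ is split by $\mathbf{1}_{|F-\mu_E|\lessgtr\delta\mu_E}$: on $\{|F-\mu_E|\le\delta\mu_E\}$ one has $|h|\le\delta\,w\sqrt{\mu_E}$, contributing $O(\delta)$; on the complement the displayed $L^1$ bound contributes $O\bigl(\delta^{-1}\{\mathcal{H}(F_0)-\mathcal{H}(\mu_E)+|M_0|+|E_0|\}\bigr)$. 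Optimizing in $\delta$ produces precisely the square root in (\ref{stability}), and iterating the resulting finite-time estimate $\|h(T_0)\|_\infty\le\tfrac12\|h_0\|_\infty+C_{T_0}\sqrt{\cdots}$ over successive blocks of length $T_0$ yields the global bound. This splitting-and-optimization step is the ingredient your sketch is missing; it is what replaces the unavailable global $L^2$ estimate.
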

Notice that we do not need any smallness assumption for the external potential $\Phi$ in both theorems.
\\
\newline There are some investigations about the dynamical problems of the Boltzmann equation with an external potential. The local well-posedness was established in \cite{Drange} and \cite{Asano84}. Near Maxwellian regime, the global well-posedness was established in \cite{LY}, \cite{UYZ}, \cite{DUYZ}, \cite{Duan}, \cite{Yu} and \cite{D-S} with some smallness assumptions for the external potential $\Phi$ using the nonlinear energy method. In \cite{Tabata1}, using the semi-group approach, the global well-posedness was established with some smallness assumptions for the external potential $\Phi$ in a periodic box. This result was later generalized in \cite{Tabata2} and \cite{Tabata3} to the case of an unbounded external potential in $\mathbb{R}^3$ with spherically symmetric assumption. Near vacuum regime, the global well-posedness was established in \cite{GuoVacuum} with a small (self-consistent) external potential and in \cite{DYZ1} with a large external potential $\Phi$ with some special conditions. In the case of 1-dimensional Boltzmann equation ($x\in\mathbb{R}, \ v\in\mathbb{R}^3$) near Maxwellian regime, the well-posedness and stability are established in \cite{E-G-M} with a large amplitude external potential.

In the presence of a large amplitude external potential, the key difficulty is the collapse of Sobolev estimate in higher order energy norms. The derivatives of the Boltzmann solution can grow in time unless the potential is small. In order to overcome this difficulty, we use the weighted $L^{\infty}$ formulation without any derivatives(\cite{Guo08}\cite{Guo_short}).
\\
\newline With the conservation of momentum for degenerate $\{v_1,\cdots,v_n\}$, we use the $L^2 -L^{\infty}$ framework(\cite{Guo08}) which consists of two parts : First, establish $L^2$-decay for the linear Boltzmann equation(Section 3) ; Second, establish the $L^{\infty}$-decay for the nonlinear Boltzmann solution using the Vidav's idea and the $L^2$-decay of the first part(Section 4).

For proving the linear $L^2-$decay (Section 3), the main difficulty is the absence of the momentum conservation laws for all velocity components $\{v_1,\cdots,v_d\}$. The key ingredient to prove the linear $L^2-$decay is the positivity of the linear Boltzmann operator $L$ (Proposition 3). Following \cite{GuoVPB}, we establish such a positivity of $L$ by the contradiction argument. The consequential limiting function is non-zero and only has the hydrodynamic part which is the null space of $L$ spanned by the basis $\{ \ \sqrt{\mu}, \ v\sqrt{\mu}, \ |v|^2 \sqrt{\mu} \ \}$. The coefficients of the limiting function satisfy the macroscopic equation $(\ref{ME1})-(\ref{ME5})$. Using the conservation of momentum (\ref{momentumcons}) for degenerate $\{v_1,\cdots,v_n\}$ and the periodicity in $\mathbb{T}^d$ crucially, we are able to show that all the coefficients are zero, which is contradiction.

For nonlinear $L^{\infty}-$decay (Section 4), we use Vidav's idea. Denote $X(s;t,x,{v})$ the backward trajectory at time $s$, starting at time $t\in [0,\infty)$, position $x\in\mathbb{T}^d$ with velocity $v\in\mathbb{R}^d$. Similarly $X(s_1;s,X(s;t,x,{v}),{v}^{\prime})$ is the backward trajectory at time $s_1$ starting at time $s\in [0,t]$, position $X(s;t,x,{v})\in\mathbb{T}^d$ with velocity $v^{\prime}\in\mathbb{R}^d$. The goal is to establish the following estimate for the solution of the Boltzmann equation :
\begin{eqnarray}
\int_0^t ds \int_{\frac{1}{N} \leq|{v}^{\prime}|\leq N} d{v}^{\prime} \int_0^{s} ds_1 \int dv^{\prime\prime} \ |f(s_1,X(s_1;s,X(s;t,x,{v}),{v}^{\prime}),v^{\prime\prime})| \label{a1}\\
\lesssim \left(\varepsilon+\frac{1}{N}\right)\sup_{0\leq s_1\leq t}||f(s_1)||_{\infty} +\int_0^t ||f(s_1)||_{L^p} \ ds_1 \ , \ \ \ \ \ \ \ \ \ \label{a2}
\end{eqnarray}
where $p=1$ or $p=2$. Once we have the estimate (\ref{a2}) for $p=2$, using the established $L^2-$decay, we are able to show the $L^{\infty}-$decay. The basic idea to show the desired estimate (\ref{a2}) is to establish
\begin{equation}
\det \left\{ \frac{dX(s_1;s,X(s;t,x,{v} ),{v}^{\prime})}{d{v}^{\prime}}
\right\}\neq 0 \ , \label{nonvanishing}
\end{equation}
for almost every $(s_1,s, v^{\prime})\in (0,t)\times (0,t)\times \mathbb{R}^d$ for all $X(s;t,x,v)\in\mathbb{T}^d$. Then we apply the change of variables
\begin{equation*}
v^{\prime} \ \rightarrow \ X(s_1;s,X(s;t,x,{v} ),{v}^{\prime}) \ ,
\end{equation*}
for main part of $(0,t)\times(0,t)\times\mathbb{R}^d$ to bound (\ref{a1}) by the $L^2-$term in (\ref{a2}) and to bound (\ref{a1}) by $L^{\infty}-$term in (\ref{a2}) for the small remainder part. In this paper, we use Asano's result in \cite{Asano94} to verify the crucial condition (\ref{nonvanishing}) for smooth external potentials. In \cite{Asano94}, using the symplectic geometric approach, the points fail to satisfy the condition (\ref{nonvanishing}) is characterized by the eigenvalues of some symmetric matrix. Because of this formulation, using the standard min-max principle, Lemma \ref{asano} was established in \cite{Asano94}. The condition (\ref{nonvanishing}) has been proved in many other cases. In \cite{Guo08}, the condition (\ref{nonvanishing}) has been shown in the case of bounded domains $\Omega\subset \mathbb{R}^d$ with several boundary conditions without an external field($\nabla\Phi \equiv 0$). Notice that the characteristics are determined according to the boundary conditions. In the case of in-flow, bounce-back, diffuse reflection boundary conditions, the condition (\ref{nonvanishing}) was proved for bounded domains $\Omega\subset\mathbb{R}^3$ with the smooth boundary $\partial\Omega$. For the specular reflection boundary condition case, the condition (\ref{nonvanishing}) was established for analytic and strictly convex domains $\Omega\subset\mathbb{R}^3$. In \cite{kim1}, for the specular reflection case, the condition (\ref{nonvanishing}) was established for analytic and non-convex, 2-dimension domains $\Omega\subset\mathbb{R}^2$. Without boundary condition : $\Omega=\mathbb{R}^d$, in \cite{GH}, the condition (\ref{nonvanishing}) was shown for self-consistent electric fields if the perturbation $f$ is small. In 1-dimensional case $\Omega=\mathbb{R}$, the condition is proved for external potentials with large amplitude (\cite{E-G-M}).
\\
\newline  Without the conservation of momentum for degenerate $\{v_1,\cdots,v_n\}$, we are not able to prove the linear $L^{2}$-decay. Instead, we use the natural excess entropy inequality (\ref{entropy}) to obtain $L^1$-stability of the Boltzmann equation(\cite{Guo_short}). The basic idea is that
\begin{equation*}
\iint \{F \ln F - \mu_E \ln \mu_E \} \sim \iint (1+\ln \mu_E) (F-\mu_E) + \iint\frac{1}{2\mu_E}(F-\mu_E)^2 \ .
\end{equation*}
Notice that the first term is controlled via the excess entropy inequality (\ref{entropy}) and the second term is controlled via the conservation of mass (\ref{masscons}) and energy (\ref{energycons}). Therefore we can control the third term and the $L^1-$norm of $|F-\mu_E|$. Indeed, the $L^p-$term in (\ref{a2}) with $p=1$ is bounded by the mass and energy and entropy. Therefore, we obtain the $L^{\infty}-$boundedness(stability) of the Boltzmann solution $F$.
\\
\newline Our paper is organized as follows. In section 2, we state the Asano's result(Lemma 1) and construct an open covering of the points fail to satisfy (\ref{nonvanishing}). In section 3, we establish the linear $L^2$-decay (Theorem 3). In section 4, we use Vidav's idea to bootstrap the nonlinear $L^{\infty}$-decay (Theorem 1) from linear $L^2$-decay. In section 5, we use the entropy-energy estimate(\cite{Guo_short}) to prove the $L^{\infty}$-stability (Theorem 2).

\section{Characteristics and Transversality}
In this section we study the characteristcs for the Boltzmann equation with an external field (\ref{Boltzmann}). The hamiltonian of the system is given by
\begin{equation*}
H(x,v) = \frac{|v|^2}{2} + \Phi(x).
\end{equation*}
We consider the Hamilton flow determined by the hamiltonian $H$ ,that is, the characteristic curve satisfying the differential equation:
\begin{eqnarray}
\frac{dX(\tau;t,x,v)}{d\tau} = V(\tau;t,x,v) \ \ \ , \ \ \ \frac{dV(\tau;t,x,v)}{d\tau}= -\nabla_x \Phi(X(\tau;t,x,v)),\label{characteristics}
\end{eqnarray}
with $[X(t;t,x,v),V(t;t,x,v)]=[x,v]$.
Clearly the hamiltonian is constant along the characteristics, i.e. \\
$H(X(\tau;t,x,v),V(\tau;t,x,v)) = H(x,v)$ for all $\tau$. Therefore we have an equality $\frac{1}{2}|V(s)|^2 + \Phi(X(s)) = \frac{1}{2}|v|^2 + \Phi(x)$ and further we have
\begin{equation*}
|V(s)|= \sqrt{|v|^2 + 2\Phi(x)-2\Phi(X(s))} \leq \sqrt{|v|^2 + 2|\Phi|_{\infty}} \leq |v| + \sqrt{2|\Phi|_{\infty}} \ .
\end{equation*}
On the other hand we have
\begin{eqnarray*}
|V(s)| = \sqrt{|v|^2 + 2\Phi(x) -2\Phi(X(s))} \geq \sqrt{|v|^2 -2 |\Phi|_{\infty}}\geq |v|-\sqrt{2|\Phi|_{\infty}} \ .
\end{eqnarray*}
Hence we know that
\begin{equation}
\Big{|} \ |V(\tau;t,x,v)|-|v| \ \Big{|} \ \leq  \ 2 |\Phi|_{\infty}^{1/2} \ ,\label{boundofV}
\end{equation}
for all $(\tau , t , x , v)$.

We will use the following geometric result of \cite{Asano94} crucially in this paper.
\begin{lemma}(\cite{Asano94})\label{asano}
Assume that  $ \ \Phi\in C^{3}(\mathbb{R}^d)$. Suppose
$
\ \det\left(\frac{dX(s_0;T_0,x_0,v_0)}{dv}\right)=0
$ for some $(s_0;T_0,x_0,v_0)\in \mathbb{R}\times\mathbb{R}\times\mathbb{R}^d \times\mathbb{R}^d$.
Then there exist $\delta>0$ and an open neighborhood $U_0\subset \mathbb{R}^d \times\mathbb{R}^d$ of $(x_0,v_0)$, and a family of Lipschitz continuous functions on $U_0$, $\{\psi_j : U_0 \rightarrow \mathbb{R}\}_{j=1}^d$ and $\psi_j(0,0)=0$ so that
\begin{equation}
\det\left(\frac{dX(s;T_0,x,v)}{dv}\right)=0,\label{zerodet}
\end{equation}
if and only if
\begin{equation}
s = s_0 + \psi_j (x,v),
\end{equation}
in $(s,x,v)\in (s_0-\delta_0,s_0+\delta_0)\times U_0$.
\end{lemma}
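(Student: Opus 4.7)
The strategy I would follow is the symplectic-geometric route of \cite{Asano94}: encode the condition $\det(dX/dv)=0$ as the vanishing of an eigenvalue of an auxiliary $d\times d$ symmetric matrix depending $C^2$-smoothly on $(s,x,v)$, and then apply a Lipschitz implicit function theorem eigenvalue by eigenvalue to extract the family $\{\psi_j\}_{j=1}^d$.

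First, linearizing (\ref{characteristics}) in the initial velocity, $Y(s):=\partial X(s;T_0,x,v)/\partial v$ and $W(s):=\partial V(s;T_0,x,v)/\partial v$ satisfy the Jacobi system
\begin{equation*}
\dot Y(s)=W(s),\qquad \dot W(s)=-\nabla^2\Phi\bigl(X(s;T_0,x,v)\bigr)\,Y(s),
\end{equation*}
with $Y(T_0)=0$ and $W(T_0)=I$. Since $\nabla^2\Phi$ is symmetric, $\frac{d}{ds}(Y^T W-W^T Y)\equiv 0$, hence $Y(s)^T W(s)=W(s)^T Y(s)$ for all $s$; in symplectic language, the graph of the linearized flow at time $s$ is a Lagrangian subspace $\mathcal{L}(s;x,v)\subset T^*\mathbb{R}^d\oplus T^*\mathbb{R}^d$, and $\det Y(s)=0$ precisely when $\mathcal{L}(s;x,v)$ fails to be transverse to the vertical Lagrangian $\{0\}\oplus\mathbb{R}^d$. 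Selecting a Lagrangian complement simultaneously transverse to $\mathcal{L}(s_0;x_0,v_0)$ and to the vertical Lagrangian, and using openness of transversality, I can represent $\mathcal{L}(s;x,v)$ on a neighborhood of $(s_0,x_0,v_0)$ as the graph of a symmetric matrix $S(s;x,v)\in\mathbb{R}^{d\times d}$ that is $C^2$ in its arguments, with $\det(\partial X/\partial v)=0\Leftrightarrow\det S(s;x,v)=0$.

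Second, I would extract the $\psi_j$ via min-max and a non-smooth implicit function theorem. The ordered eigenvalues $\lambda_1(s,x,v)\leq\cdots\leq\lambda_d(s,x,v)$ of $S(s;x,v)$ are jointly Lipschitz in $(s,x,v)$ by the Courant--Fischer min-max principle, and at least one of them vanishes at $(s_0,x_0,v_0)$. The explicit form of $S$ read off from the second variation of the action integral along the Hamiltonian trajectory shows that $\partial_s S(s_0;x_0,v_0)$ is non-degenerate on $\ker S(s_0;x_0,v_0)$, so each vanishing ordered eigenvalue crosses zero transversally in $s$. Clarke's Lipschitz implicit function theorem then produces, for each index $j$, a Lipschitz function $\psi_j$ on a common $U_0$ with $\psi_j(x_0,v_0)=0$ solving $\lambda_j(s,x,v)=0$ as $s=s_0+\psi_j(x,v)$; shrinking $\delta_0$ and $U_0$, these account for all zeros of $\det(\partial X/\partial v)$ in $(s_0-\delta_0,s_0+\delta_0)\times U_0$.

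The main obstacle, and the reason only Lipschitz (not $C^1$) regularity of the $\psi_j$ can be claimed, is eigenvalue crossings: when two of the $\lambda_j$ meet at $(s_0,x_0,v_0)$ their individual derivatives need not exist classically, even though the collection remains Lipschitz. The symplectic structure is what rescues the argument, because the crossing form $\partial_s S(s_0;x_0,v_0)\big|_{\ker S(s_0;x_0,v_0)}$ is strictly definite, which is the infinitesimal statement that conjugate times of the natural Hamiltonian $H(x,v)=|v|^2/2+\Phi(x)$ are non-degenerate. This non-degeneracy computation is the heart of \cite{Asano94}, and I would invoke it directly rather than reprove it.
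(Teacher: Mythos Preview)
The paper does not prove this lemma at all: it is stated as Lemma~\ref{asano} with attribution to \cite{Asano94} and used as a black box. The only information the paper gives about the argument is the remark in the introduction that in \cite{Asano94} ``using the symplectic geometric approach, the points [failing] to satisfy the condition (\ref{nonvanishing}) [are] characterized by the eigenvalues of some symmetric matrix,'' and that ``using the standard min-max principle, Lemma~\ref{asano} was established.'' Your outline matches that description exactly: you reduce $\det(\partial X/\partial v)=0$ to the vanishing of an eigenvalue of a symmetric matrix $S(s;x,v)$ coming from the Lagrangian-graph representation of the linearized flow, and then invoke min-max together with a Lipschitz implicit function theorem. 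So there is nothing in the paper's own treatment to compare your proposal against beyond confirming that the high-level strategy you describe is the one the paper attributes to Asano.

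One point worth flagging in your sketch: the step ``$\partial_s S(s_0;x_0,v_0)$ is non-degenerate on $\ker S(s_0;x_0,v_0)$'' is doing all the real work, and you are right to identify it as the heart of \cite{Asano94}. Since you explicitly say you would invoke that computation rather than reprove it, your proposal is in effect the same as the paper's --- cite Asano for the result --- only with more of the internal mechanism spelled out.
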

Using Lemma \ref{asano} we construct the $\varepsilon$-neighborhood of the set of points $(s,x,v)$ satisfying (\ref{zerodet}).
\begin{lemma}\label{epsilonneigh}
Assume that $\Phi\in C^{3}(\mathbb{T}^d)$ is a periodic function. Fix $T_0>0$ and $N>0$. There are disjoint open interval partitions of the time interval $[0,T_0]$ : $ \mathfrak{D}_{I^1}^1\subset [0,T_0]$ for $i_1 \in \{1,2,\cdots,M^1\}$ and disjoint open box partitions of the periodic box $\mathbb{T}^d$ : $ \mathfrak{D}_{i^2}^2 \subset \mathbb{T}^d$ for multi-index $I^2=(i_1^2, i_2^2,\cdots,i_d^2)\in\{1,2,\cdots,M^2\}^d$ ; and disjoint open box partitions of $\{v\in\mathbb{R}^d : v_i \in [-4N,4N] \ \text{for all } \ i=1,2\cdots,d\}$ :  $ \mathfrak{D}_{I^3}^3$ for $I^3=(i_1^3, i_2^3,\cdots,i_d^3)\in\{1,2,\cdots,M^3\}^d$. For each $i^1, I^2$ and $I^3$ we have $t_{j,i^1,I^2,I^3}\in \mathfrak{D}_{i^1}^1$ for $j=1,2,...,d$ so that
\begin{equation*}
\bigg\{ s\in \mathfrak{D}_{i_1}^1  :
\det \left( \frac{dX(s;T_0,x,v)}{dv}\right)=0
\bigg\} \ \subset \
\bigcup_{j=1}^d\bigg\{ s\in \left(t_{j,i^1,I^2,I^3}-\frac{\varepsilon}{4M^1} \ , \ t_{j,i^1,I^2,I^3}+\frac{\varepsilon}{4M^1}\right)
\bigg\} \ ,
\end{equation*}
for all $(x,v)\in \mathfrak{D}_{I^2}^2 \times \mathfrak{D}_{I^3}^3$  and
\begin{equation}
\det\left(\frac{dX(s;T_0,x,v)}{dv}\right)  > \delta_* \ \ \ \text{for} \ \ s\notin \bigcup_{j=1}^d \left(t_{j,i^1,I^2,I^3}-\frac{\varepsilon}{4M^1} \ , \ t_{j,i^1,I^2,I^3}+\frac{\varepsilon}{4M^1}\right),
\end{equation}
if $(s,x,v)\in \mathfrak{D}_{i^1}^1 \times \mathfrak{D}_{I^2}^2 \times \mathfrak{D}_{I^3}^3$ for all $i^1, I^2$ and $I^3$.
\end{lemma}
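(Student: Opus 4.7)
The plan is to combine Asano's local description (Lemma~\ref{asano}) with a compactness plus Lebesgue-number argument on the compact set $K := [0,T_0] \times \mathbb{T}^d \times [-4N,4N]^d$. Since $\Phi \in C^3$, the Hamilton flow is $C^3$, so $D(s,x,v) := \det(dX(s;T_0,x,v)/dv)$ is continuous on $K$. For each zero $p \in Z := \{D=0\}$, Lemma~\ref{asano} supplies a product neighborhood $W_p = (s_0-\delta_p, s_0+\delta_p) \times U_p$ on which $Z$ is the union of at most $d$ Lipschitz graphs $s = s_0 + \psi_{p,j}(x,v)$ with some Lipschitz constant $L_p$. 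For each $q \in K \setminus Z$ I pick an open ball $B_q$ on which $|D| > |D(q)|/2$. These open sets cover $K$; I extract a finite subcover $\{W_{p_k}\}_{k=1}^{K_1} \cup \{B_{q_l}\}_{l=1}^{K_2}$, let $\lambda > 0$ be a Lebesgue number for it, set $L := \max_k L_{p_k}$, and $\eta := \min_l |D(q_l)|/2$.

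I next pick $M^1$ large enough that $T_0/M^1 < \min\{\lambda/2,\; \min_k \delta_{p_k}\}$, and then $M^2, M^3$ large enough that $\mathrm{diam}(\mathfrak{D}^2_{I^2} \times \mathfrak{D}^3_{I^3}) < \min\{\lambda/2,\; \varepsilon/(8 L M^1)\}$. Every triple box $B = \mathfrak{D}^1_{i^1} \times \mathfrak{D}^2_{I^2} \times \mathfrak{D}^3_{I^3}$ then lies in either a safe ball $B_{q_l}$ (in which case $|D| > \eta$ on $B$ and I take the $t_j$ arbitrarily in $\mathfrak{D}^1_{i^1}$) or an Asano neighborhood $W_{p_k}$. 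In the latter case, the zeros in $B$ at each $(x,v) \in \mathfrak{D}^2 \times \mathfrak{D}^3 \subset U_{p_k}$ are exactly $s = s_0 + \psi_{p_k,j}(x,v)$; the range $I_j := \{s_0+\psi_{p_k,j}(x,v) : (x,v) \in \overline{\mathfrak{D}^2 \times \mathfrak{D}^3}\}$ is a connected interval of length less than $\varepsilon/(8M^1)$. If $I_j \cap \mathfrak{D}^1_{i^1} \neq \emptyset$ I put $t_j$ inside it; otherwise I pick $t_j$ arbitrarily in $\mathfrak{D}^1_{i^1}$. Every zero of $D$ in $B$ then lies within distance $< \varepsilon/(8M^1)$ of the corresponding $t_j$, hence strictly inside the admissible $\varepsilon/(4M^1)$-ball.

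For the uniform lower bound, on the closed subset of $\overline{B}$ where $s \notin \bigcup_j (t_j - \varepsilon/(4M^1), t_j + \varepsilon/(4M^1))$ the continuous function $|D|$ has no zeros (all zeros sit in the strictly smaller $\varepsilon/(8M^1)$-bands), so compactness yields a positive minimum $\delta_*^{\mathrm{box}}$; taking the minimum over the finitely many triple boxes (and including $\eta$ from the safe case) produces a single $\delta_* > 0$. The main obstacle is coordinating the three scales so that the Lipschitz drift $L \cdot \mathrm{diam}(\mathfrak{D}^2 \times \mathfrak{D}^3)$ of the zero-branches across each spatial-velocity box is beaten by the target radius $\varepsilon/(4M^1)$, while simultaneously every triple box fits inside some single cover element. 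This threading succeeds only because compactness of $K$ reduces Asano's local lemma to a finite subcover with a uniform Lipschitz constant $L$, and because $M^1$ is chosen first so that $M^2, M^3$ can be taken sufficiently fine afterwards.
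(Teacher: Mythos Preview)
Your proposal is correct and follows essentially the same route as the paper: cover the compact set $K$ by Asano neighborhoods at zeros and trivial neighborhoods at non-zeros, extract a finite subcover, take a uniform Lipschitz constant over the finitely many $\psi_j$'s, then refine the $(x,v)$-grid so that the drift of each zero-branch across a box is $<\varepsilon/(8M^1)$, and finally pull $\delta_*$ from compactness. The only cosmetic difference is that the paper builds the grid directly from the endpoints of the finite subcover (so each cell sits in a cover element by construction), whereas you use a Lebesgue number to guarantee the same containment; your explicit handling of the case $I_j\cap\mathfrak{D}^1_{i^1}=\emptyset$ is in fact slightly cleaner than the paper's.
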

\begin{proof}
Choose $(t_0,x_0,v_0)\in [0,T_0]\times\mathbb{T}^d \times \{ v\in \mathbb{R}^d : v_i \in [-4N,4N], \ \ \text{for} \ i=1,2,...,d\}$.
\newline \underline{First Case} : If
\begin{equation}
\det \left( \frac{dX(t_0;T_0,x_0,v_0)}{ dv}\right) \neq 0, \nonumber
\end{equation}
then there exist positive numbers $\{\tau^0; \xi^0_{1},\cdots,\xi^0_{d}; \eta^0_{1}, \cdots \eta^0_{d}\}$ such that
\begin{eqnarray}
\det \left( \frac{dX(t;T_0,x,v)}{ dv}\right) \neq 0, \ \
\text{for all} \ \ (t;x_1,\cdots,x_d; v_1,\cdots,v_d)\in (t_0 -\tau^0, t_0+\tau^0)\times \ \ \ \ \ \ \ \ \ \ \ \ \ \ \ \ \ \ \ \nonumber\\
\times\big\{((x_0)_1-\xi^0_1 , (x_0)_1+\xi^0_1)\times\cdots\times ((x_0)_d-\xi^0_d , (x_0)_d+\xi^0_d)\big\} \times \big\{((v_0)_1 -\eta^0_1 , (v_0)_1 +\eta^0_1 ) \times \cdots ((v_0)_d -\eta^0_d , (v_0)_d +\eta^0_d )\big\}.\nonumber
\end{eqnarray}
\newline \underline{Second Case} : If
\begin{equation}
\det \left( \frac{dX(t_0;T_0,x_0,v_0)}{ dv}\right) = 0, \nonumber
\end{equation}
then by Lemma \ref{asano}, there exist positive numbers $\{\tau^0; \xi^0_{1},\cdots,\xi^0_{d}; \eta^0_{1}, \cdots \eta^0_{d}\}$ and Lipschitz functions $\psi^0_j$ defined on
\begin{eqnarray*}
\big\{((x_0)_1-\xi^0_1 , (x_0)_1+\xi^0_1)\times\cdots\times ((x_0)_d-\xi^0_d , (x_0)_d+\xi^0_d)\big\} \times \big\{((v_0)_1 -\eta^0_1 , (v_0)_1 +\eta^0_1 ) \times \cdots ((v_0)_d -\eta^0_d , (v_0)_d +\eta^0_d )\big\}
\end{eqnarray*}
and $\psi^0_j(0,0)=0$ so that, for $(t;x_1,\cdots,x_d; v_1,\cdots,v_d)\in (t_0 -\tau^0, t_0+\tau^0)\times\big\{((x_0)_1-\xi^0_1 , (x_0)_1+\xi^0_1)\times\cdots\times ((x_0)_d-\xi^0_d , (x_0)_d+\xi^0_d)\big\} \times \big\{((v_0)_1 -\eta^0_1 , (v_0)_1 +\eta^0_1 ) \times \cdots \times((v_0)_d -\eta^0_d , (v_0)_d +\eta^0_d )\big\},$
\begin{equation}
\det\left( \frac{dX(t;T_0,x,v)}{dv}\right)=0 \ , \label{nonva}
\end{equation}
if and only if
\begin{equation}
t=t_0 + \psi_j^0(x_1,\cdots,x_d;v_1,\cdots v_d) \ \ \text{for some } \ j=1,2,\cdots,d. \label{nonja}
\end{equation}

Notice that from the first and second case, we obtain an open covering of $[0,T_0]\times\mathbb{T}^d\times\{v\in\mathbb{R}^d : v_i \in [-4N,4N]\}$. Since $[0,T_0]\times \mathbb{T}^d \times \{v\in \mathbb{R}^d : v_i \in [-4N,4N]\}$ is a compact set, we can choose finite points $(t_i;(x_i)_1,\cdots,(x_i)_d;(v_i)_1,\cdots(v_i)_d)$ and positive numbers $\{ \tau^i;\xi^i_1,\cdots,\xi^i_d ;\eta^i_1,\cdots,\eta^i_d\}$ for finite index $i \ $'s. Notice that
\begin{eqnarray*}
(t_i -\tau^i, t_i + \tau^i) &\times& \Big\{ \left( \  (x_i)_1 -\xi_1^i \ , \ (x_i)_1 +\xi_1^i \ \right) \times \cdots \times \left( \  (x_i)_d -\xi_d^i \ , \ (x_i)_d +\xi_d^i \ \right)
\Big\}\\
&\times& \Big\{ \left( \  (v_i)_1 -\eta_1^i \ , \ (v_i)_1 +\eta_1^i \ \right) \times \cdots \times \left( \  (v_i)_d -\eta_d^i \ , \ (v_i)_d +\eta_d^i \ \right)
\Big\},
\end{eqnarray*}
forms an open covering for finite $i \ $'s. Define a refined grid via relabeling as
\begin{eqnarray}
\{ \ 0=a_1 < a_2< \cdots < a_{M_1}=T_0 \ \} &=& \{ \ t_i\pm\tau^i \ \ \text{for all} \ \ i \ 's \ \},\\
\{ \ -1 =\tilde{b}_1 < \tilde{b}_2 <\cdots < \tilde{b}_{\tilde{M}_2} =1 \ \} &=&
\{ \ (x_i)_k \pm \xi^i_k \ \ \text{for all} \ \ i \ 's, \ \ k=1,2,\cdots, d \ \},\label{G1}\\
\{ \ -4N =\tilde{c}_1 < \tilde{c}_2 <\cdots < \tilde{c}_{\tilde{M}_3} = 4N \ \} &=&
\{ \ (v_i)_k \pm \eta^i_k \ \ \text{for all} \ \ i \ 's, \ \ k=1,2,\cdots, d \ \}.\label{G2}
\end{eqnarray}
We use the index $i^1 =1,2,\cdots, M^1$ and $\tilde{i}^2_k =1,2,\cdots, \tilde{M}^2$ and $\tilde{i}^3_k =1,2,\cdots , \tilde{M}^3$ for all $k=1,2,\cdots, d$, and multi-index $\tilde{I}^2 = (\tilde{i}^2_1, \tilde{i}^2_2, \cdots, \tilde{i}^2_d)\in \{1,2,\cdots, \tilde{M}^2\}^d$ and $\tilde{I}^3 = (\tilde{i}^3_1, \tilde{i}^3_2, \cdots, \tilde{i}^3_d)\in \{1,2,\cdots, \tilde{M}^3\}^d$. Notice that for each $(j,i^1,\tilde{I}^2,\tilde{I}^3)$ we have $t_{j,i^1,\tilde{I}^2,\tilde{I}^3} \in (a_{i^1},a_{i^1 +1})$ and a Lipschitz function $\psi_{j,i^1,\tilde{I}^2,\tilde{I}^3}$. Using the Lipschitz continuity, we choose a constant $C>0$ so that
\begin{equation*}
| \ \psi_{j,i^1,\tilde{I}^2,\tilde{I}^3}(x,v)- \psi_{j,i^1,\tilde{I}^2,\tilde{I}^3}(\bar{x},\bar{v}) \ | \ \leq \ C \ | \ (x,v)-(\bar{x},\bar{v}) \ | \ ,
\end{equation*}
for all $j, i^1, \tilde{I}^2, \tilde{I}^3$ which are finite indices or finite multi-indices. From the above inequality we have
\begin{equation}
| \ \psi_{j,i^1,\tilde{I}^2,\tilde{I}^3}(x,v)- \psi_{j,i^1,\tilde{I}^2,\tilde{I}^3}(\bar{x},\bar{v}) \ | \ \leq \ \frac{\varepsilon}{8M^1} \ ,\label{111}
\end{equation}
for $|(x,v)-(\bar{x},\bar{v})|\leq \frac{\varepsilon}{8M^1 C}$. Therefore we further refine the grid of (\ref{G1}) and (\ref{G2}) as (if necessary we may put more points to make the grid finer)
\begin{eqnarray*}
\{-1 =b_1 < b_2 <\cdots < b_{{M}^2} =1 \} &\supset& \{-1 =\tilde{b}_1 < \tilde{b}_2 <\cdots < \tilde{b}_{\tilde{M}^2} =1 \}
,\\
\{-4N ={c}_1 < {c}_2 <\cdots < {c}_{{M}^3} = 4N \} &\supset&\{-4N =\tilde{c}_1 < \tilde{c}_2 <\cdots < \tilde{c}_{\tilde{M}^3} = 4N \},
\end{eqnarray*}
and denote multi-indices $I^2 = (i^2_1, i^2_2, \cdots , i^2_d) \in \{1,2,\cdots, M^2\}^d$ and $I^3 = (i^3_1, i^3_2, \cdots , i^3_d) \in \{1,2,\cdots, M^3\}^d$ and define
\begin{eqnarray}
&&\mathfrak{D}_{i^1}^1 \equiv (a_{i^1},a_{i^1 + 1 }),\\
&&\mathfrak{D}_{I^2}^2 =\mathfrak{D}_{i^2_1,\cdots,i^2_d}^2 \equiv (b_{i^2_1}, b_{i^2_1+1}) \times\cdots \times(b_{i^2_d}, b_{i^2_d+1}),\\
&&\mathfrak{D}_{I^3}^3 =\mathfrak{D}_{i^3_1,\cdots,i^3_d}^3 \equiv (c_{i^3_1}, b_{i^3_1+1}) \times\cdots \times(c_{i^3_d}, c_{i^3_d+1}),
\end{eqnarray}
so that $|b_{i^2_1+1}-b_{i^2_1}|+ \cdots +|b_{i^2_d+1}-b_{i^2_d}|+|c_{i^3_1+1}-c_{i^3_1}|+ \cdots +|c_{i^3_d+1}-c_{i^3_d}|\leq \frac{\varepsilon}{8M^1 C}$ and (\ref{111}) is valid for all $(x,v), (\bar{x},\bar{v}) \in \mathfrak{D}_{I^2}^2 \times \mathfrak{D}_{I^3}^3$. From (\ref{nonja}), for each $j,i^1,I^2,I^3$ there exist $t_{j,i^1,I^2,I^3}\in \mathfrak{D}^1_{i^1}$ so that
\begin{eqnarray}
t_{j,i^1,I^2,I^3} + \psi_{j,i^1,{I}^2,{I}^3}(x,v) \ \in \ \bigcup_{j=1}^d( \ t_{j,i^1,I^2,I^3} -\frac{\varepsilon}{2M^1} \ , t_{j,i^1,I^2,I^3} + \frac{\varepsilon}{2M^1} \ ) \ , \ \ \text{for all} \ \ \ (x,v)\in \mathfrak{D}_{I^2}^2 \times \mathfrak{D}_{I^3}^3.\nonumber
\end{eqnarray}
Therefore $$\det\left( \frac{dX(t;T_0,x,v)}{dv}\right)\neq0 \ ,$$ holds for
$
t\in\mathfrak{D}_{i^1}^1 \big{\backslash} ( \ t_{j,i^1,I^2,I^3} -\frac{\varepsilon}{2M^1} \ , t_{j,i^1,I^2,I^3} + \frac{\varepsilon}{2M^1} \ ) \ \ \text{and for} \ \ (x,v)\in\mathfrak{D}_{I^2}^2 \times \mathfrak{D}_{I^3}^3 \ . \nonumber
$
Notice that $\det\left( \frac{dX(t;T_0,x,v)}{dv}\right)$ is continuous and non-zero on a compact set
\begin{eqnarray}
\Big\{ \ \overline{\mathfrak{D}^1_{i^1}} \ \bigg{\backslash} \ \bigcup_{j=1}^d( \ t_{j,i^1,I^2,I^3} -\frac{\varepsilon}{4M^1} \ , t_{j,i^1,I^2,I^3} + \frac{\varepsilon}{4M^1} \ ) \ \Big\} \ \times \ \overline{\mathfrak{D}^2_{I^2}} \ \times \ \overline{\mathfrak{D}^3_{I^3}} \ .\label{compactset}
\end{eqnarray}
Hence there exists positive number $\delta_{i^1,I^2,I^3}>0$ so that $$\det\left(\frac{dX(t;T_0,x,v)}{dv}\right) > \delta_{i^1,I^2,I^3} \ , $$ on the set (\ref{compactset}). Define $\delta_* = \min_{i^1, I^2, I^3} \delta_{i^1,I^2,I^3}>0$ where $i^1, I^2, I^3$ are finite indices. This proves the Lemma.
\end{proof}
\section{Linear $L^{2}$ Decay}
In this section, we will show the $L^2-$decay of the solution of the linear Boltzmann equation :
\begin{equation}
\partial_t f + v\cdot \nabla_x f -\nabla\Phi\cdot\nabla_v f + e^{-\Phi(x)}Lf = 0 \ \ , \ \ f(0,x,v) = f_0 (x,v),\label{linearizedBE}
\end{equation}
assuming the conservation of mass (\ref{masscons}) and energy (\ref{energycons}) and momentum for degenerate $\{v_1,\cdots,v_n\}$ (\ref{momentumcons}) with $(M_0,E_0,\mathbf{J}_0)=(0,0,\mathbf{0})\in\mathbb{R}\times\mathbb{R}\times\mathbb{R}^n$. Here, the number $n$ in the degenerate $\{v_1,\cdots,v_n\}$ is a dimension of the degenerate subspace of $\nabla\Phi$ in (\ref{n}) where, in general, the degenerate subspace of $\nabla\Phi$ is a zero space $\{0\}$ and $n=0$. Therefore, in this section, we are showing actually the linear $L^2-$decay only with the conservation of mass and energy for the generic external potential $\Phi$ in the periodic box $\mathbb{T}^d$.

For notational simplicity, we use $\langle\cdot,\cdot\rangle$ to denote the standard $L^2-$inner product in $\mathbb{T}^d \times\mathbb{R}^d$. We also define
\begin{equation*}
\langle g_1, g_2 \rangle_{\nu} \equiv \langle \nu(v) g_1, g_2 \rangle.
\end{equation*}
We shall use $||\cdot||$ and $||\cdot||_{\nu}$ to denote their corresponding $L^2$ norms.
\begin{theorem}
\label{L2decay}
Assume that the external potential $\Phi$ is a periodic $C^3-$function on $\mathbb{T}^d$ and $\Phi=\Phi(x_{n+1},\cdots,x_d)$. Let $f(t,x,v)\in L^2$ be the (unique) solution to the linear Boltzmann equation (\ref{linearizedBE}). Assume that $f$ satisfies the conservations of mass (\ref{masscons}) and energy (\ref{energycons}) and momentum for degenerate $\{v_1,\cdots,v_n\}$ (\ref{momentumcons}) with $(M_0,E_0,\mathbf{J}_0)=(0,0,\mathbf{0})\in\mathbb{R}\times\mathbb{R}\times\mathbb{R}^n$. Then there exists $\lambda > 0$ and $C>0$ such that
\begin{equation*}
\sup_{0\leq t\leq \infty}e^{\lambda t} || f(t)|| \ \leq \ C || f(0)|| \ .
\end{equation*}
\end{theorem}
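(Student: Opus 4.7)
The plan is to combine a standard $L^2$ dissipation identity with a uniform-in-time coercivity (positivity) estimate for $L$ along solutions, and then to close by iteration to obtain exponential decay. Pairing (\ref{linearizedBE}) with $f$ in $L^2(\mathbb{T}^d\times\mathbb{R}^d)$, the transport field $v\cdot\nabla_x-\nabla\Phi\cdot\nabla_v$ is the Hamiltonian vector field of $H(x,v)=|v|^2/2+\Phi(x)$ and is therefore divergence-free in $(x,v)$. Integrating by parts in $x$ (using periodicity) and in $v$ (using decay of $f^2$) kills $\langle v\cdot\nabla_x f-\nabla\Phi\cdot\nabla_v f,f\rangle$, so
\begin{equation*}
\frac{1}{2}\frac{d}{dt}\|f(t)\|^2 \;=\; -\langle e^{-\Phi}Lf,f\rangle \;\leq\; -c_0\, e^{-|\Phi|_\infty}\,\|(I-P)f(t)\|_\nu^2,
\end{equation*}
where $P$ projects onto the hydrodynamic null space $\mathcal{N}=\mathrm{span}\{\sqrt{\mu},\,v\sqrt{\mu},\,|v|^2\sqrt{\mu}\}$ of $L$ and $c_0>0$ is the standard spectral gap of $L$ on $\mathcal{N}^\perp$.

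The heart of the proof is a uniform positivity estimate: there exists $M>0$ so that for every solution of (\ref{linearizedBE}) satisfying (\ref{masscons})-(\ref{energycons})-(\ref{momentumcons}) with $(M_0,E_0,\mathbf{J}_0)=0$,
\begin{equation*}
\int_0^T \|Pf(s)\|^2\,ds \;\leq\; M\int_0^T \|(I-P)f(s)\|_\nu^2\,ds \quad\text{for all } T\geq 1.
\end{equation*}
Following the scheme of \cite{GuoVPB}, I would argue by contradiction: if this fails, renormalize to obtain solutions $f^k$ and times $T^k$ with $\int_0^{T^k}\|Pf^k\|^2=1$ and $\int_0^{T^k}\|(I-P)f^k\|_\nu^2\to 0$. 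Any weak-$L^2$ limit $f$ must then lie pointwise in the null space of $L$, so $f=Pf=\{a(t,x)+b(t,x)\cdot v+c(t,x)|v|^2\}\sqrt{\mu}$. Substituting this ansatz into (\ref{linearizedBE}) with $Lf=0$ and matching velocity moments in $\{1,v_i,v_iv_j,|v|^2 v_i\}$ produces the macroscopic system (\ref{ME1})-(\ref{ME5}) for $(a,b,c)$ on $[0,\infty)\times\mathbb{T}^d$.

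The conservation of mass and energy with $M_0=E_0=0$ kills the spatial averages of $a$ and of the appropriate combination of $a$ and $c$. For $i\leq n$, since $\partial_i\Phi\equiv 0$, the coordinate $v_i$ is a genuine conserved moment and (\ref{momentumcons}) with $\mathbf{J}_0=\mathbf{0}$ forces $\int_{\mathbb{T}^d}b_i\,dx=0$, which combined with the macroscopic equations yields $b_i\equiv 0$. For $i>n$ I would test the macroscopic momentum equation against $\partial_i\Phi$, integrate over $\mathbb{T}^d$, and exploit periodicity together with the internal relations between $\nabla a$, $\nabla c$ and $\nabla\Phi$ imposed by the system to exclude nontrivial $b_i$. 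Once $b\equiv 0$, the remaining relations force $c\equiv 0$ and then $a\equiv 0$, contradicting the normalization after the standard velocity-averaging argument that upgrades weak convergence of $Pf^k$ to strong convergence on bounded $v$-sets. Combined with Step 1, the positivity estimate gives, for $T\geq 1$, $\|f(T)\|^2\leq(1-\eta)\|f(0)\|^2$ for some uniform $\eta\in(0,1)$, using that $t\mapsto\|f(t)\|$ is non-increasing; iterating on consecutive intervals of length $T$ produces the claimed $\|f(t)\|\leq Ce^{-\lambda t}\|f(0)\|$.

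The main obstacle is precisely the elimination of the non-degenerate components $b_{n+1},\ldots,b_d$: when $n<d$, the classical argument of \cite{GuoVPB} is unavailable because not every momentum component is conserved. The periodicity of $\Phi$ on $\mathbb{T}^d$ and the geometric decomposition (\ref{v1tovn})-(\ref{n}) must compensate, killing these components through \emph{internal} relations among the macroscopic unknowns and $\nabla\Phi$ rather than by an external conservation law. This is where the hypothesis that $\Phi$ is periodic and $C^3$ (without any smallness) enters the $L^2$ theory, and it is the ingredient that distinguishes this result from earlier work requiring a small potential.
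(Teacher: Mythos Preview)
Your overall architecture matches the paper's: an $L^2$ energy identity, a coercivity (positivity) bound on unit time intervals obtained by contradiction and compactness, velocity averaging to upgrade weak to strong convergence of $KZ_k$, and identification of the limit with a hydrodynamic field obeying the macroscopic equations (\ref{ME1})--(\ref{ME5}). That part is fine.

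The gap is exactly where you flag it: the elimination of the non-degenerate components $b_{n+1},\dots,b_d$. Your proposed attack (``test the macroscopic momentum equation against $\partial_i\Phi$'') is too vague to close, and in fact the paper's mechanism is cleaner and rather different. From (\ref{ME3})--(\ref{ME5}) one computes $\Delta_x b_i=0$; since $b_i$ is periodic on $\mathbb{T}^d$, this forces $b_i=b_i(t)$. Then (\ref{ME3}) and (\ref{ME5}) give $c\equiv c_0$ constant, and integrating (\ref{ME2}) over $\mathbb{T}^d$ (periodicity kills $\int\nabla_x a$ and $\int\nabla_x\Phi$) gives $\dot b=0$, so $b=\mathbf{b}_0$ is a constant vector. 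Now (\ref{ME2}) reads $\nabla_x a=2c_0\nabla_x\Phi$, hence $a=2c_0\Phi+\varphi(t)$, and plugging into (\ref{ME1}) yields
\[
\dot\varphi(t)=\mathbf{b}_0\cdot\nabla_x\Phi(x)\quad\text{for all }(t,x).
\]
The left side depends only on $t$, the right only on $x$; both are therefore a constant, and integrating the right side over $\mathbb{T}^d$ shows that constant is $0$. Thus $\mathbf{b}_0\cdot\nabla\Phi(x)\equiv 0$, i.e.\ $\mathbf{b}_0\in\Lambda(\mathbb{T}^d)^\perp=\mathrm{span}\{e_1,\dots,e_n\}$. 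This is what kills $b_{n+1},\dots,b_d$: it is not a moment-testing trick against $\partial_i\Phi$ but the observation that once $b$ is a constant vector, (\ref{ME1}) forces it to be orthogonal to \emph{every} value of $\nabla\Phi$, hence to lie in the degenerate subspace. Only then do the momentum conservation laws for $v_1,\dots,v_n$ kill the remaining components $(\mathbf{b}_0)_1,\dots,(\mathbf{b}_0)_n$, and the mass/energy constraints finish off $\varphi_0,c_0$ via a $2\times 2$ linear system with nonzero determinant (strict Cauchy--Schwarz, since $\sqrt{\mu_E}$ and $(\Phi+|v|^2/2)\sqrt{\mu_E}$ are linearly independent).

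Two smaller points. First, the paper formulates the coercivity on the fixed unit interval $[0,1]$, $\int_0^1\langle Lf,f\rangle\,ds\ge M\int_0^1\|f\|_\nu^2\,ds$, and then iterates; your version on variable intervals $[0,T^k]$ would need an extra rescaling/translation step to extract a limiting solution on a fixed interval. Second, the limiting ansatz is more naturally written as $\{a+b\cdot v+c|v|^2\}\sqrt{\mu_E}$ rather than $\sqrt{\mu}$, so that the coefficients $a,b,c$ directly inherit periodicity in $x$ and the macroscopic equations take the clean form (\ref{ME1})--(\ref{ME5}).
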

\begin{proof}
We will show that Proposition \ref{Positivity} implies Theorem \ref{L2decay}, following the proof of Theorem 5 in \cite{Guo08}. Assume Proposition \ref{Positivity} is true. Let $0\leq N \leq t \leq N+1 \ $, $N$ being an integer. We split $[0,t]= [0,N]\cup [N,t]$. First we establish the $L^2$ energy estimate for any solution $f$ to the linear Boltzmann equation (\ref{linearizedBE}) on the time interval $[N,t]$ as
\begin{equation}
||f(t)||^2 + 2\int_N^t  \int_{\mathbb{T}^d} e^{-\Phi(x)} \int_{\mathbb{R}^d} Lf\cdot f \ dv \ dx \ ds
= ||f(N)||^2.\label{energyESNt}
\end{equation}
From (\ref{linearizedBE}), we have the equation for $e^{\lambda t}f(t)$ :
\begin{equation*}
\{\partial_t + v\cdot \nabla_x -\nabla_x \Phi \cdot \nabla_v + e^{-\Phi(x)}L \}\{e^{\lambda t}f\} -\lambda e^{\lambda t}f = 0.\label{elambdatBE}
\end{equation*}
For the time interval $[0,N]$, we multiply the above equation by $e^{\lambda t} f$ to derive the $L^2-$energy estimate on the time interval $[0,N]$ :
\begin{eqnarray*}
e^{2\lambda N}||f(N)||^2 + 2\int_0^N e^{2\lambda s} \int_{\mathbb{T}^d} e^{-\Phi(x)} \int_{\mathbb{R}^d} Lf\cdot f dv dx ds -\lambda \int_0^N e^{2\lambda s}||f(s)||^2 ds
= ||f(0)||^2.
\end{eqnarray*}
Divide the time interval $[0,N]$ into $\bigcup_{k=0}^{N-1} [k,k+1)$ and define $f_k (s,x,v)\equiv f(k+s ,x,v)$ for $k=0,1,2,...,N-1$. Then we can rewrite the above equation as
\begin{eqnarray}
e^{2\lambda N}|| f(N)||^2 + \underbrace{\sum_{k=0}^{N-1} \int_0^1  e^{2\lambda\{k+s\}} \int_{\mathbb{T}^d} e^{-\Phi(x)} \int_{\mathbb{R}^d}Lf_k (s)\cdot f_k (s) \ dv dx ds }_{\mathbf{(A)}}
=||f(0)||^2 + \underbrace{\lambda \sum_{k=0}^{N-1} \int_0^1 e^{2\lambda \{ k+s \}}||f_k(s)||^2 ds}_{\mathbf{(B)}}.\label{eneryESN0}
\end{eqnarray}
Notice that $f_k (k+s,x,v)$ satisfies the linear Boltzmann equation (\ref{linearizedBE}) in the time interval $[0,1]$. By Proposition \ref{Positivity}, we have a lower bound for $\mathbf{(A)}$ as
\begin{eqnarray*}
\mathbf{(A)} \geq\sum_{k=0}^{N-1} e^{2\lambda k} e^{-|\Phi|_{\infty}} \int_0^1 \langle Lf_k(s), f_k(s)\rangle ds \geq \sum_{k=0}^{N-1} e^{2\lambda k} e^{-|\Phi|_{\infty}} M \int_0^1 ||f_k(s)||_{\nu}^2 ds\geq \nu_0 M e^{-|\Phi|_{\infty}} \sum_{k=0}^{N-1}  e^{2\lambda k}  \int_0^1 ||f_k(s)||^2 ds.
\end{eqnarray*}
Using the fact $e^{2\lambda (k+s)} \leq e^{2\lambda} e^{2\lambda k}$ for $s\in [0,1]$, we have $\mathbf{(B)} \leq \lambda e^{2\lambda} \sum_{k=0}^{N-1} \int_0^1 e^{2\lambda k} ||f_k(s)||^2 ds$. Thus, for sufficiently small $\lambda>0$, we have a positive lower bound of $\mathbf{(A)}-\mathbf{(B)}$ as
\begin{equation*}
\mathbf{(A)}-\mathbf{(B)} \geq (\nu_0M e^{-|\Phi|_{\infty}} -\lambda e^{2\lambda}) \sum_{k=0}^{N-1} \int_0^1 e^{2\lambda k} ||f_k(s)||^2 ds \geq 0.
\end{equation*}
Therefore from (\ref{eneryESN0}), we have
\begin{equation*}
e^{2\lambda N} ||f(N)||^2 \ \leq \ ||f(0)||^2.\label{expdecayN}
\end{equation*}
Further we can choose $\lambda>0$ small so that $e^{2\lambda (t-N)} \leq 2$ for all $t\in[N,N+1]$. Hence, multiply (\ref{energyESNt}) by $e^{2\lambda t}$ and combine with the above inequality to conclude
\begin{eqnarray*}
e^{2\lambda t} ||f(t)||^2 \ \leq \ e^{2\lambda t}||f(N)||^2 \ \leq \ e^{2\lambda\{t-N\}} ||f_0 ||^2 \ \leq \ 2||f_0||^2.
\end{eqnarray*}
\end{proof}
\begin{proposition}\label{Positivity}
Assume that the external potential $\Phi$ is a periodic $C^3$-function on $\mathbb{T}^d$ and $\Phi=\Phi(x_{n+1},...,x_d)$.
Let $f(t,x,v)$ be any solution to the linear Boltzmann equation (\ref{linearizedBE}) satisfying the conservations of mass (\ref{masscons}) and energy (\ref{energycons}) and momentum for degenerate $\{v_1,\cdots,v_n\}$ (\ref{momentumcons}) with $(M_0,E_0,\mathbf{J}_0)=(0,0,\mathbf{0})\in\mathbb{R}\times\mathbb{R}\times\mathbb{R}^n$. Then there exists $M>0$ such that $f$ satisfies
\begin{equation}
\int_0^1 \langle Lf(s), f(s)\rangle ds \ \geq \ M \int_0^1 ||f(s)||_{\nu}^2 ds. \label{positivity}
\end{equation}
\end{proposition}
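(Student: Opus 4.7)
The plan is a contradiction argument in the spirit of Guo's treatment of the linearized Boltzmann equation with external fields. Assume no such $M$ exists; then there is a sequence $\{f_n\}$ of solutions to (\ref{linearizedBE}) satisfying the three vanishing conservation laws, with $\int_0^1 \langle L f_n, f_n\rangle \, ds < \frac{1}{n} \int_0^1 \|f_n\|_\nu^2 \, ds$. Normalizing $Z_n \equiv f_n / \sqrt{\int_0^1 \|f_n\|_\nu^2 \, ds}$ gives $\int_0^1 \|Z_n\|_\nu^2 \, ds = 1$ and $\int_0^1 \langle L Z_n, Z_n\rangle \, ds \to 0$, while each $Z_n$ still satisfies (\ref{linearizedBE}) and inherits the vanishing mass, energy, and degenerate momentum.

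The first step is the standard $L^2_v$ coercivity $\langle L g, g\rangle \gtrsim \|(\mathbf{I}-\mathbf{P})g\|_\nu^2$, where $\mathbf{P}$ is the velocity projection onto the null space $\mathcal{N}(L) = \mathrm{span}\{\sqrt{\mu},\, v_1\sqrt{\mu},\ldots, v_d\sqrt{\mu},\, |v|^2\sqrt{\mu}\}$. Together with the vanishing of the coercive form, this forces $\|(\mathbf{I}-\mathbf{P})Z_n\|_\nu \to 0$ in $L^2([0,1])$. Extracting a weak limit $Z_n \rightharpoonup Z$, and noting that the hydrodynamic part $\mathbf{P}Z_n$ is finite-dimensional in $v$ with uniformly bounded Gaussian tails, I identify $Z = \mathbf{P}Z$, so
$$Z(s,x,v) = \bigl\{ a(s,x) + b(s,x) \cdot v + c(s,x)\,|v|^2 \bigr\} \sqrt{\mu(v)}.$$
Passing to the limit in (\ref{linearizedBE}), using $L Z_n \to 0$ in the appropriate weak sense, shows that $Z$ distributionally satisfies the free Hamilton transport equation $\partial_t Z + v\cdot\nabla_x Z - \nabla_x\Phi\cdot\nabla_v Z = 0$, with the vanishing conservation laws intact.

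Substituting the hydrodynamic ansatz into this transport equation and testing against each basis element of $\mathcal{N}(L)$ produces a closed macroscopic system for $(a,b,c)$. Because $\Phi$ depends only on $x_{n+1},\ldots,x_d$, the projections in the directions $e_1,\ldots,e_n$ reduce to the familiar potential-free macroscopic equations, whereas the term $\nabla_x\Phi\cdot\nabla_v Z$ contributes additional forcing of the form $b\cdot\nabla_x\Phi$, $c\,\nabla_x\Phi$, and $a\,\nabla_x\Phi$ in the non-degenerate directions.

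The main obstacle, and the one that genuinely uses all three hypotheses, is to show $a\equiv b\equiv c\equiv 0$. Here I would exploit the periodicity of $\mathbb{T}^d$ systematically: integrating the macroscopic continuity and energy equations over $\mathbb{T}^d$ and invoking $M_0 = E_0 = 0$ forces the spatial means of $a$ and $c$ to vanish for every $s$, while $\mathbf{J}_0 = \mathbf{0}$ kills the means of $b_1,\ldots,b_n$. A bootstrap on the remaining equations, repeatedly using that $\int_{\mathbb{T}^d}\nabla_x(\text{periodic}) = 0$, should then peel off the $x$-dependence of $a$, $c$ and show that $b$ is likewise spatially rigid. Finally, the non-degenerate macroscopic equations force the resulting constant vector $b$ to satisfy $b\cdot\nabla_x\Phi(x) = 0$ for every $x\in\mathbb{T}^d$, so that
$$b \in \bigcap_{x\in\mathbb{T}^d} \Lambda(x)^\perp = \Lambda(\mathbb{T}^d)^\perp = \mathrm{span}\{e_1,\ldots,e_n\}$$
by (\ref{v1tovn}) and (\ref{n}), hence $b_{n+1} = \cdots = b_d = 0$. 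Combined with the earlier vanishing, this gives $Z\equiv 0$, which contradicts $\int_0^1 \|Z_n\|_\nu^2\,ds = 1$ once combined with the strong convergence of $\mathbf{P}Z_n$. The technical heart will be tracking the bootstrap on $\mathbb{T}^d$ cleanly and executing the pointwise-in-$x$ orthogonality argument that invokes the very definition of the degenerate subspace $\Lambda(\mathbb{T}^d)^\perp$.
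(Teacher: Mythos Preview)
Your overall strategy matches the paper's: contradiction argument, normalization, coercivity to kill $(\mathbf{I}-\mathbf{P})Z_n$, weak limit $Z = \mathbf{P}Z$, macroscopic equations from the free transport, and then $Z \equiv 0$ via periodicity and the conservation laws. The macroscopic endgame you sketch is essentially what the paper carries out (it computes $\Delta b_i = 0$ from the macroscopic system, uses periodicity to make $b$ constant, then $c$ constant, then $a = 2c_0\Phi + \varphi_0$ with $b_0 \cdot\nabla\Phi \equiv 0$; the conservation laws then force all constants to zero, the pair $(\varphi_0,c_0)$ via a $2\times 2$ linear system whose determinant is nonzero by Cauchy--Schwarz).

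However, there is a real gap at the point where you obtain the contradiction. You assert ``strong convergence of $\mathbf{P}Z_n$'' at the end, but the only compactness you have invoked is that $\mathbf{P}Z_n$ is finite-dimensional in $v$ with Gaussian tails; this gives nothing in the $(t,x)$ variables, so weak convergence in $L^2([0,1]\times\mathbb{T}^d\times\mathbb{R}^d)$ is all you have. With only $Z_n \rightharpoonup Z = 0$ and $\int_0^1\|Z_n\|_\nu^2\,ds = 1$ there is no contradiction. The paper closes this gap via a velocity-averaging lemma: one shows $KZ_n \to KZ$ \emph{strongly} in $L^2$ (this is the paper's Step~2, using that the moments $\int \kappa_1(u)Z_n\,du$ lie in $H^\alpha_{t,x}$ because $Z_n$ solves a transport equation with a right-hand side bounded in $L^2$). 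Then from $1 - \int_0^1 \langle KZ_n, Z_n\rangle\,ds = \int_0^1\langle LZ_n, Z_n\rangle\,ds \to 0$ one deduces $\int_0^1\langle KZ, Z\rangle\,ds = 1$, and since $\int_0^1\|Z\|_\nu^2\,ds \le 1$ by weak lower semicontinuity, the chain $0 \le \int_0^1\langle LZ,Z\rangle\,ds = \int_0^1\|Z\|_\nu^2\,ds - 1 \le 0$ forces $\int_0^1\|Z\|_\nu^2\,ds = 1$, hence $Z \not\equiv 0$. You could equally well apply averaging directly to the hydrodynamic moments to get strong convergence of $\mathbf{P}Z_n$, but some $(t,x)$-compactness mechanism is indispensable and is currently missing from your sketch.
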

\begin{proof}
We prove the Proposition by the contradiction argument. If the inequality of (\ref{positivity}) is not true, then a sequence of solutions $f_k (t,x,v)$ (not identically zero) to (\ref{linearizedBE}) exists so that
\begin{equation*}
\int_0^1 \langle Lf_k(s), f_k(s)\rangle ds \leq \frac{1}{k} \int_0^1 ||f_k(s)||_{\nu}^2 ds.
\end{equation*}
Equivalently, in terms of normalization $Z_k (t,x,v) = \frac{f_k (t,x,v)}{\sqrt{\int_0^1 ||f_k (s)||_{\nu}^2 ds}}$, we have
\begin{equation}
\int_0^1 \langle LZ_k(s) ,Z_k(s) \rangle ds \leq \frac{1}{k},\label{44}
\end{equation}
and from $\nu(v)\geq \nu_0, $ we have
$
\nu_0 \int_0^1 ||Z_k(s)||^2 ds \leq \int_0^1 ||Z_k(s)||_{\nu}^2 ds =1.\nonumber
$
Due to the weak compactness in $L^2$ space, there exists $Z(t,x,v)$ with $\int_0^1||Z(s)||_{\nu} ds \leq 1$ such that
\begin{equation}
Z_k \rightharpoonup Z  \ \ \text{weakly in} \ \ \int_0^1 ||\cdot||_{\nu}^2 ds \ \ \text{and} \ \ \int_0^1 ||\cdot||^2 ds.\label{weakconv}
\end{equation}
On the other hand, since $f_k (t,x,v)$ solves (\ref{linearizedBE}), $Z_{k}(t,x,v)$ satisfies the same equation
\begin{equation}
\{\partial_t +v\cdot\nabla_x -\nabla\Phi(x)\cdot\nabla_v \}Z_k (t,x,v) + e^{-\Phi(x)}LZ_k(t,x,v)=0,\label{eqntZ_k}
\end{equation}
and hence $Z_k(t,x,v)$ satisfies same conservation laws (\ref{masscons}), (\ref{energycons}), and (\ref{momentumcons}) with $(M_0, E_0, \mathbf{J}_0)=(0, 0, \mathbf{0})\in\mathbb{R}\times\mathbb{R}\times\mathbb{R}^n$ as $f_k(t,x,v)$ does. Using the weak convergence in (\ref{weakconv}), we conclude that for almost every $t\in [0,1]$, the limiting function $Z(t,x,v)$ also satisfies
\begin{eqnarray}
&&\iint_{\mathbb{T}^d \times\mathbb{R}^d} Z(t,x,v) \sqrt{\mu_E(x,v)} dv dx \ = \ 0 \ ,\label{conserlawZ_k_1}\\
&&\iint_{\mathbb{T}^d \times\mathbb{R}^d} \left(\Phi(x)+\frac{|v|^2}{2}\right)Z(t,x,v) \sqrt{\mu_E(x,v)} dv dx \ = \ 0 \ ,\label{conserlawZ_k_2}\\
&&\iint_{\mathbb{T}^d \times\mathbb{R}^d} (v_1,\cdots,v_n)^T \ Z(t,x,v) \sqrt{\mu_E(x,v)} dv dx \ = \ 0\label{conserlawZ_k_3} \ .
\end{eqnarray}
\newline\textbf{Step 1} : $\mathbf{P}Z_k \rightharpoonup \mathbf{P}Z \ \ \text{weakly in} \ \ \int_0^1 ||\cdot||^2_{\nu} ds \ \ \text{and} \ \ \int_0^1 ||\cdot||^2 ds.$\\
It suffices to show that
\begin{equation}
\int_0^1 \langle \mathbf{P}Z_k, \phi_1(s,x) \phi_2(v) \rangle_{\nu}  ds \rightarrow \int_0^1 \langle \mathbf{P}Z, \phi_1(s,x) \phi_2(v) \rangle_{\nu}  ds,\label{weak}
\end{equation}
for any smooth functions $\phi_1(s,x)$ on $[0,1]\times\mathbb{T}^d$ and $\phi_2(v)$ on $\mathbb{R}^d$ with compact supports.

Temporally denote $(m_0,m_1,\cdots,m_d,m_{d+1})\equiv(1,v_1,\cdots,v_d,|v|^2)$. Then the left-hand side of (\ref{weak}) is written componentwisely as
\begin{eqnarray}
\int_0^1\int_{\mathbb{T}^d} \int_{\mathbb{R}^d}\left(
\int_{\mathbb{R}^d} m_i(u) \sqrt{\mu(u)} \ Z_k(s,x,u) du \right) m_i(v)\sqrt{\mu(v)} \ \nu(v) \phi_1(s,x) \phi_2(v) dv dx ds\nonumber\\
=\int_{\mathbb{R}^d} \underline{\left(
\int_0^1 \int_{\mathbb{T}^d} \int_{\mathbb{R}^d}
m_i \sqrt{\mu} \ Z_k \phi_1
du dx ds
\right) m_i(v) \sqrt{\mu(v)}\nu(v)\phi_2(v)}
dv.\label{above1}
\end{eqnarray}
Notice that the underlined integrand of (\ref{above1}) is bounded by $L^1$ function uniformly in $k $ 's , i.e.
\begin{equation}
\left(\int_0^1 || m_i \phi_1 \sqrt{\mu}||_{L^2_{x,v}}^2 ds\right)^{\frac{1}{2}}
\left(
\int_0^1 ||Z_k(s)||_{L^2_{x,v}}^2 ds\right)^{\frac{1}{2}} m_i(v) \nu(v)\phi_2(v)\sqrt{\mu(v)} \leq C  m_i(v) \nu(v)\phi_2(v)\sqrt{\mu(v)} \in L^1(\mathbb{R}^d),\nonumber
\end{equation}
and the underlined integrand of (\ref{above1}) converges to
$
\left(
\int_0^1 \int_{\mathbb{T}^d} \int_{\mathbb{R}^d}
m_i \sqrt{\mu} \ Z \phi_1
du dx ds
\right) m_i(v) \sqrt{\mu(v)}\nu(v)\phi_2(v)\nonumber
$ for almost every $v\in\mathbb{R}^d$. By the Lebesque convergence theorem, we conclude the convergence of (\ref{weak}).\\
\newline\textbf{Step 2} : $K(Z_k) \rightarrow K(Z) \ \ \ \text{in} \ \ \ L^2 ([0,1]\times\mathbb{T}^d \times \mathbb{R}^d).$ \\
The proof of this step is same as Step 1 in the proof of Lemma 4.1 in \cite{GuoVPB} page 1124. Denote $k(u,v)$ as the kernel of the operator $K$ and define it's approximation $k_m(u,v) \equiv k(u,v) \mathbf{1}_{\{ (u,v) : |u-v|\geq \frac{1}{m}, |v|\leq m\}}$. Since $k_m \in L^2 (\mathbb{R}^d \times\mathbb{R}^d)$ we can choose smooth functions with compact supports to satisfy
\begin{equation*}
\kappa_{\varepsilon}(u,v) = \kappa_1(u) \kappa_2(v) \ \ \ \text{such that} \ \ \  ||k_m -\kappa_{\varepsilon}||^2 \leq \varepsilon.
\end{equation*}
In order to prove this step, we only need to change (4.8) in \cite{GuoVPB} by
\begin{eqnarray}
&&[\partial_t + v\cdot\nabla_x -\nabla_x\Phi(x)\cdot\nabla_v] \left\{ \kappa_1(v) \chi(t,x) Z_k(t,x,v)\right\} \nonumber\\
&& \ \ \ \ \ \ \ \ \ \ \ \ \ \ \ \ \ \ \ \ \ \ \ = [\partial_t + v\cdot\nabla_x -\nabla_x\Phi(x)\cdot\nabla_v] \{ \kappa_1(v)\chi(t,x)\}Z_k(t,x,v),\label{eq1}
\end{eqnarray}
where $\chi(t,x)$ is a smooth cut-off function in $(0,1)\times\mathbb{R}^d$ such that $\chi(t,x)\equiv 1$ in $[\varepsilon,1-\varepsilon]\times\mathbb{T}^d$. It is strainghforward to verify that the right hand side of (\ref{eq1}) is uniformly bounded in $L^2([0,1]\times\mathbb{R}^d \times\mathbb{R}^d)$. From the velocity average lemma(\cite{BJ},\cite{GLPS}), for some $\alpha>0$,
$$
\int_{\mathbb{R}^d} \chi(t,x) \kappa_1(u) Z_k (t,x,u) du \ \in \ H^{\alpha}([0,1]\times\mathbb{R}^d).
$$
It follows that, up to a subsequence,
$$
\int_{\mathbb{R}^d} \kappa_1(u) Z_k(s,x,u) du \rightarrow \int_{\mathbb{R}^d} \kappa_1(u)Z(s,x,v) du,
$$
strongly in $L^2([\varepsilon,1-\varepsilon]\times\mathbb{T}^d)$ and this suffices to prove \textbf{Step 2}. For detail, see the proof of Lemma 4.1 in \cite{GuoVPB}.\\
\newline\textbf{Step 3} : $Z(t,x,v) = \{a(t,x) + v\cdot \mathbf{b}(t,x) + |v|^2 c(t,x) \}\sqrt{\mu_E(x,v)} \ \neq \ 0$.\\
From the definitions of $L=\nu-K$ and $Z_k$, we have
\begin{eqnarray}
1- \int_0^1 \langle KZ_k(s),Z_k(s) \rangle ds=
\int_0^1 || Z_k(s)||_{\nu} ds - \int_0^1 \langle KZ_k(s),Z_k(s) \rangle ds
= \int_0^1 \langle LZ_k(s), Z_k(s) \rangle ds .\label{sharp}
\end{eqnarray}
Combining the strong convergence in \textbf{Step 2} with the weak convergence of $Z_k$, we conclude
\begin{equation*}
\lim_{k\rightarrow\infty}\int_0^1\langle KZ_k(s), Z_k(s)\rangle = \int_0^1\langle KZ(s), Z(s)\rangle.
\end{equation*}
Combining the above relation with (\ref{sharp}) and $0\leq \int_0^1 \langle LZ_k(s),Z_k(s)\rangle ds \leq \frac{1}{k}$, we conclude
\begin{equation*}
0=1-\int_0^1 \langle KZ(s),Z(s)\rangle ds. \label{zero}
\end{equation*}
Using the above equation, with the non-negativity of $L$ and $\int_0^1 ||Z(s)||_{\nu} ds \leq 1$, we have
\begin{eqnarray*}
0\leq \int_0^1 \langle LZ(s), Z(s) \rangle ds = \int_0^1 ||Z(s)||_{\nu} ds - \int_0^1 \langle KZ(s),Z(s)\rangle ds
\leq 1- \int_0^1 \langle KZ(s),Z(s)\rangle ds = 0,
\end{eqnarray*}
to conclude
\begin{equation}
\int_0^1 \langle LZ(s), Z(s) \rangle ds =0 \ , \ \  \ \ \ \int_0^1 ||Z(s)||_{\nu} ds =1. \label{zeroo}
\end{equation}
Now use the standard property of $L$ :
\begin{equation}
\int_{\mathbb{R}^d} LZ(s,x,v) \cdot Z(s,x,v)dv \geq  C\int_{\mathbb{R}^d} \nu(v)|\{\mathbf{I}-\mathbf{P}\}Z(s,x,v)|^2 dv,\label{sharpsharp}
\end{equation} and combine with (\ref{zeroo}) to conclude
\begin{equation}
\{\mathbf{I}-\mathbf{P}\}Z(t,x,v)=0,\nonumber
\end{equation}
for almost every $(t,x,v)\in [0,1]\times\mathbb{T}^d \times\mathbb{R}^d$. Hence $
Z=\mathbf{P}Z = \{\tilde{a}(t,x)+ v\cdot \mathbf{\tilde{b}}(t,x) +|v|^2 \tilde{c}(t,x)\} \sqrt{\mu(v)}
$ where $[\tilde{a}(t,x),\mathbf{\tilde{b}}(t,x),\tilde{c}(t,x)]$ is linear combinations of $\left[\int Z(t,x,\cdot)\sqrt{\mu}dv, \int v Z(t,x,\cdot)\sqrt{\mu}dv , \int |v|^2Z(t,x,\cdot)\sqrt{\mu}dv\right] $. \\
Define $[a(t,x),\mathbf{b}(t,x),c(t,x)]=[\tilde{a}(t,x),\mathbf{\tilde{b}}(t,x),\tilde{c}(t,x)]\times e^{\frac{\Phi(x)}{2}}$ then we have
\begin{equation}
Z(t,x,v) = \{a(t,x) + v\cdot \mathbf{b}(t,x) + |v|^2 c(t,x) \}\sqrt{\mu_E}\label{Z} \ \ .
\end{equation}
From (\ref{zeroo}) we conclude that $Z(t,x,v)$ is not identically zero.\\
\newline\textbf{Step 4} : $Z\equiv 0$\\
This leads to a contradiction to \textbf{Step 3}. Notice that from (\ref{44}) and (\ref{sharpsharp}), we have
$$
\int_0^1 ||\{\mathbf{I}-\mathbf{P}\}Z_k(s)||_{\nu}^2 ds \ \leq \ \frac{1}{C} \int_0^1 \langle LZ_k(s), Z_k(s)\rangle ds \ \rightarrow \ 0,
$$
to conclude $\{\mathbf{I}-\mathbf{P}\}Z_k \rightarrow 0$ strongly in $\int_0^1 ||\cdot||_{\nu} ds$.
From $LZ_k = L\{\mathbf{I}-\mathbf{P}\}Z_k$, we have $\int_0^1 \langle L\{\mathbf{I}-\mathbf{P}\}Z_k, \varphi\rangle ds =\int_0^1 \langle LZ_k, \varphi\rangle ds \rightarrow 0$ for all $\varphi\in C_c^{\infty}([0,1]\times\mathbb{T}^d \times \mathbb{R}^d)$. Hence letting $k\rightarrow \infty$ in (\ref{eqntZ_k}), we have, in the sense of distribution,
\begin{equation*}
\partial_t Z + v\cdot\nabla_x Z -\nabla\Phi(x)\cdot \nabla_v Z =0 \label{eqntZ}.
\end{equation*}
We plug (\ref{Z}) into the above equation and expand as the products of a polynomial in $v_i$  :
\begin{eqnarray}
\{\dot{a} - \nabla_x\Phi\cdot b\}\sqrt{\mu_E} +\{\dot{b}+ \nabla_x a -2c\nabla_x \Phi \}\cdot v\sqrt{\mu_E}
+\sum_i (\dot{c}+\partial_{x_i}b_i)|v_i|^2 \sqrt{\mu_E} \ \ \ \ \ \ \ \ \ \ \ \ \ \ \ \ \ \ \ \nonumber\\
 \ \ \ \ \ \ \ \  \ \ \ \ \ \ \ \ \ \ \ \ \ \  \ \ \ \ \ \ \ \ \ \ \ \ + \sum_{i\neq j} \{\partial_{x_i}b_j\} v_i v_j \sqrt{\mu_E} + \nabla_x c \cdot v |v|^2 \sqrt{\mu_E}=0.\label{eqtnMM}
\end{eqnarray}
Since $\sqrt{\mu}, v_i \sqrt{\mu}, v_j v_k \sqrt{\mu}$ and $v_l v_m v_n \sqrt{\mu}$ are linearly independent, we deduce that in the sense of distributions, all the coefficients on the left hand side of (\ref{eqtnMM}) should be zero. We therefore obtain the \textbf{macroscopic equations} of $a(t,x), \mathbf{b}(t,x)$ and $c(t,x)$, which was introduced in \cite{GuoVPB} :
\begin{eqnarray}
\dot{a}(t,x) - \nabla_x \Phi(x) \cdot \mathbf{b}(t,x) &=& 0\label{ME1}\\
\dot{\mathbf{b}}(t,x) + \nabla_x a(t,x) - 2c(t,x) \nabla_x \Phi(x) &=& 0\label{ME2}\\
\dot{c}(t,x) + \partial_{x_i}b_i &=& 0 \ \ \ \ \text{for all } \ i \label{ME3}\\
\partial_{x_j}b_i + \partial_{x_i}b_j &=& 0 \ \ , \ \ i\neq j\label{ME4}\\
\nabla_x c(t,x) &=&0\label{ME5}
\end{eqnarray}
We obtain the Laplace equation of $b_i (t,x)$
\begin{eqnarray}
\Delta b_i
&=& \sum_{j=1}^d \partial_{j} \partial_{j} b_i
= \sum_{j\neq i} \partial_{j}\underline{\partial_{j}b_i}_{*} + \partial_{i}\underline{\partial_{i}b_i}_{**}
= \sum_{j\neq i} \partial_{j}\underline{(-\partial_{i}b_j)}_{\diamond} + \partial_{i}\underline{(-\dot{c})}_{\diamond\diamond}\nonumber\\
&=& -\partial_i \left(\sum_{j\neq i}\underline{\partial_j b_j}_{**} \right) - \partial_i \dot{c} = -(d-1)\partial_i \underline{(-\dot{c})}_{\diamond\diamond}-\partial_i \dot{c}
=(d-2)\underline{\partial_i \dot{c} \ = \ 0}_{***} \label{b2}
\end{eqnarray}
where we used (\ref{ME4}) for $*=\diamond$ and used (\ref{ME3}) for $**=\diamond\diamond$ and used (\ref{ME5}) for $***$. From (\ref{Z}) we have, for all $i=1,2,..,d,$
\begin{eqnarray*}
\int_{\mathbb{R}^d} v_i Z(t,x,v) dv = \sum_j \int_{\mathbb{R}^d} v_i v_j b_j (t,x) e^{-\frac{|v|^2}{4}}e^{-\frac{\Phi(x)}{2}} dv = b_i (t,x) e^{-\frac{\Phi(x)}{2}} \int_{\mathbb{R}^d} |v_i|^2 e^{-\frac{|v|^2}{4}}dv.
\end{eqnarray*}
Since $Z(t,x,v)$ is periodic in $x$, we conclude that $\mathbf{b}(t,x)$ is also periodic in $x$. Using the periodicity of $\mathbf{b}$, multiply (\ref{b2}) by $b_i$ and integrate to yield
\begin{equation*}
0= \int_{\mathbb{T}^d} \Delta b_i b_i dx = - \int_{\mathbb{T}^d} |\nabla b_i|^2 dx,
\end{equation*}
and
\begin{equation*}
\nabla_x b_i(t,x)=0 \ ,   \ \ \  \ \mathbf{b}(t,x)=\mathbf{b}(t) \ \ \ \ \text{for almost all} \ \ (t,x)  .
\end{equation*}
Combining the above equality with (\ref{ME3}) and (\ref{ME5}), we conclude that
\begin{equation}
c(t,x) =c_0 \ \ \ \ \text{for almost all} \ \ (t,x) .\label{const-c}
\end{equation}
Further integrate (\ref{ME2}) on $\mathbb{T}^d$ to have
\begin{eqnarray*}
0= \int_{\mathbb{T}^d} \ (\ref{ME2}) \ dx=\dot{\mathbf{b}}(t) + \int_{\mathbb{T}^d} \nabla_x a(t,x) dx -2c_0 \int_{\mathbb{T}^d} \nabla_x \Phi(x) dx = \dot{\mathbf{b}}(t),
\end{eqnarray*}
where we used the periocity in $x\in \mathbb{T}^d$ of $a(t,x)$ and $\Phi(x)$. Therefore we conclude $\mathbf{b}(t,x) =\mathbf{b}_0.$
From (\ref{ME1}) and (\ref{ME2}), we have equations of $a(t,x)$:
\begin{eqnarray*}
\nabla_x a(t,x) = 2c_0\nabla_x \Phi(x) \ , \ \ \ \
\dot{a}(t,x) = \mathbf{b}_0 \cdot \nabla_x \Phi(x).
\end{eqnarray*}
From the first equation above, we have $a(t,x)=2c_0 \Phi(x)+ \varphi(t)$ for some $\varphi$. Plugging this formula into the second equation above, we get
\begin{equation*}
\dot{\varphi}(t)=\dot{a}(t,x)=\mathbf{b}_0 \cdot \nabla_x \Phi(x).\label{dd}
\end{equation*}
Since the left hand side of the above equation is a function of $t$ only and the right hand side is a function of $x$ only, we conclude that both of them are constant. In order to show that the both sides are zero actually, we utilize the periodicity of the external potential $\Phi$ : take the integration over $x\in\mathbb{T}^d$ to yield
\begin{equation*}
\int_{\mathbb{T}^d} \mathbf{b}_0 \cdot \nabla_x \Phi(x) dx =  \int_{\mathbb{T}^d}  \nabla_x \cdot\{ \Phi(x)\mathbf{b}_0\} dx= 0.
\end{equation*}
Therefore we conclude that, for all $t\in\mathbb{R}$ and $x\in\mathbb{T}^d$, we have $\varphi(t) \ \equiv \ \varphi_0$ and \begin{eqnarray}
&& \ \ \ \ \ \ \ a(t,x) \ \equiv \ 2c_0 \Phi(x)+\varphi_0 \ , \label{a}\\
&&\mathbf{b}_0 \cdot \nabla_x \Phi(x)  \ \equiv \ 0 \ \ \text{for all } \ x\in\mathbb{T}^d \ .\nonumber
\end{eqnarray}
Recall $\Lambda(\mathbb{T}^d)$, the degenerate subspace of $\nabla\Phi$ in (\ref{v1tovn}). By the definition, we have
\begin{eqnarray}
\mathbf{b}_0 \in \Lambda(\mathbb{T}^d)^{\bot}, \ \ \ \text{and hence} \ \ \ \mathbf{b}_0 = ((\mathbf{b}_0)_1,\cdots, (\mathbf{b}_0)_n,0,\cdots,0).\label{zero-b}
\end{eqnarray}
For simplicity we think $\mathbf{b}_0$ as a vector in $\mathbb{R}^n.$
To sum, we plug (\ref{const-c}), (\ref{a}) and (\ref{zero-b}) into (\ref{Z}) to conclude
\begin{equation}
Z(t,x,v) = \big\{ \varphi_0 +\mathbf{b}_0\cdot(v_1, \cdots,v_n ) + 2c_0 \big( \Phi(x) +\frac{|v|^2}{2} \big) \big\} \sqrt{\mu_E(x,v)}.
\end{equation}
Notice that we obtain the above formula for $Z$ only using the macroscopic equations and periodicity in $x\in\mathbb{T}^d$.

In order to conclude $Z\equiv 0$, we use the conservations of mass (\ref{conserlawZ_k_1}) and energy (\ref{conserlawZ_k_2}) and momentums for degenerate $\{v_1,\cdots,v_n\}$ (\ref{conserlawZ_k_3}) crucially. From the conservation of momentum for degenerate $\{v_1, \ldots,v_n\}$, we have
\begin{equation}
0= \int_0^1 \iint_{\mathbb{T}^d \times\mathbb{R}^d} v_i Z(s,x,v) \sqrt{\mu_E (x,v)} dv dx ds =
(\mathbf{b}_0)_i \int_0^1 \iint_{\mathbb{T}^d \times\mathbb{R}^d} (v_i)^2 \mu_E(x,v) dv dx ds,\label{cruciall}
\end{equation}
for all $i=1,\cdots,n$, so that $\mathbf{b}_0\equiv \mathbf{0} \in \mathbb{R}^n$.

From the conservation of mass for $Z$ in (\ref{conserlawZ_k_1}), we have
\begin{equation}
0= \varphi_0 \iint_{\mathbb{T}^d \times\mathbb{R}^d} \mu_E(x,v) dv dx +
2c_0 \iint_{\mathbb{T}^d \times\mathbb{R}^d} \left(\Phi(x)+\frac{|v|^2}{2}\right)\mu_E(x,v) dv dx,\nonumber
\end{equation}
and from the conservation of energy for $Z$ in (\ref{conserlawZ_k_2}), we have
\begin{equation}
0= \varphi_0 \iint_{\mathbb{T}^d \times\mathbb{R}^d}\left(\Phi(x)+\frac{|v|^2}{2}\right) \mu_E(x,v) dv dx +
2c_0 \iint_{\mathbb{T}^d \times\mathbb{R}^d} \left(\Phi(x)+\frac{|v|^2}{2}\right)^2\mu_E(x,v) dv dx.\nonumber
\end{equation}
Using the notation $\langle\cdot,\cdot\rangle$ for $L^2(\mathbb{T}^d\times\mathbb{R}^3)$ inner product, the above two equations are
\begin{eqnarray*}
\left(\begin{array}{ccc} \big\langle \sqrt{\mu_E},\sqrt{\mu_E} \ \big\rangle & \big\langle (\Phi(x)+{|v|^2}/{2})\sqrt{\mu_E}, \ \sqrt{\mu_E} \ \big\rangle\\
\big\langle (\Phi(x)+{|v|^2}/{2})\sqrt{\mu_E}, \ \sqrt{\mu_E} \ \big\rangle &
\big\langle (\Phi(x)+{|v|^2}/{2})\sqrt{\mu_E}, \ (\Phi(x)+{|v|^2}/{2})\sqrt{\mu_E} \ \big\rangle
\end{array}
\right)
\left(\begin{array}{ccc}
\varphi_0 \\
2c_0
\end{array}\right)
=\left(\begin{array}{ccc}0\\0\end{array}\right).
\end{eqnarray*}
Once we show that the determinant of the above matrix is not zero :
\begin{equation}
\big\langle \sqrt{\mu_E},\sqrt{\mu_E} \ \big\rangle \big\langle (\Phi(x)+{|v|^2}/{2})\sqrt{\mu_E}, \ (\Phi(x)+{|v|^2}/{2})\sqrt{\mu_E} \ \big\rangle
-\big\langle (\Phi(x)+{|v|^2}/{2})\sqrt{\mu_E}, \ \sqrt{\mu_E} \ \big\rangle^2 \neq 0,\label{nonzz}
\end{equation}
then we conclude $\varphi_0=0$ and $c_0=0$ and hence $Z\equiv 0$. From the Cauchy-Schwartz inequality
\begin{equation*}
\big\langle (\Phi(x)+{|v|^2}/{2})\sqrt{\mu_E}, \ \sqrt{\mu_E} \ \big\rangle^2 \lneqq
\big\langle \sqrt{\mu_E},\sqrt{\mu_E} \ \big\rangle \big\langle (\Phi(x)+{|v|^2}/{2})\sqrt{\mu_E}, \ (\Phi(x)+{|v|^2}/{2})\sqrt{\mu_E} \ \big\rangle,
\end{equation*}
if $\sqrt{\mu_E}$ and $(\Phi(x)+{|v|^2}/{2})\sqrt{\mu_E}$ are linearly independent which is obvious. Thus we conclude (\ref{nonzz}).
\end{proof}

\section{Nonlinear $L^{\infty}$ Decay}
In this section, we prove Theorem 1, especially the nonlinear $L^{\infty}-$decay in (\ref{nondecay}). Recall the weight function by $w(x,v)=\{\frac{|v|^2}{2} + \Phi(x)\}^{\beta /2}$ in (\ref{weight}) and define a weighted perturbation by
\begin{equation}
h(t,x,v) = w(x,v) \times \frac{F(t,x,v)-\mu_E(x,v)}{\sqrt{\mu_E(x,v)}}.\label{h}
\end{equation}
Notice that $h(t,x,v)=w(x,v)f(t,x,v)$. Then $h$ satisfies
\begin{eqnarray}
\{\partial_t + v\cdot \nabla_x - \nabla\Phi(x)\cdot \nabla_v\} h(t,x,v) + e^{-\Phi(x)}\nu h(t,x,v) - e^{-\Phi(x)}K_w h
=  e^{-\frac{\Phi(x)}{2}} w \Gamma (\frac{h}{w},\frac{h}{w}),\label{linearBE-h}
\end{eqnarray}
where $L_w h = \nu(v) h - K_w h$ with $K_w h = w K (\frac{h}{w})$ (\cite{Guo08}). Notice that via Lemma 19 in \cite{Guo08}, assuming $\sup_{0\leq s\leq T_0}e^{\lambda s} ||h(s)||_{\infty}$ is small, we only have to show that there exist $\lambda > 0$ and $T_0 >0$ and $C_{T_0}>0$ such that
\begin{equation}
|| h(T_0)||_{\infty} \leq e^{-\lambda T_0}||h_0||_{\infty} + C_{T_0}\int_0^{T_0} ||f(s)||_{L^2}ds, \label{expdecayT}
\end{equation}
in order to show the nonlinear $L^{\infty}-$decay, i.e.
\begin{equation}
\sup_{0\leq t\leq \infty}e^{\lambda t}||h(t)||_{\infty} \leq C ||h_0||_{\infty},\label{expdecay}
\end{equation}
which is equivalent to (\ref{nondecay}). Once we establish (\ref{expdecay}), proving the existence and uniqueness, positivity of the Boltzmann solution $F$ were established in \cite{Guo08}.

For any $(t,x,v)$, integrating along its backward trajectory $\frac{dX(s)}{ds}=V(s), \frac{dV(s)}{ds}=-\nabla\Phi(X(s))$ in (\ref{characteristics}), we express
\begin{eqnarray}
h(t,x,v)&=&e^{-\int_0^t e^{-\Phi(X(\tau))} \nu(V(\tau)) d\tau} h(0,X(0),V(0))\label{1}\\
&+& \int_0^t e^{-\Phi(X(s)) - \int_s^t e^{-\Phi(X(\tau))} \nu(V(\tau)) d\tau
}
K_w h (s,X(s),V(s)) ds\label{2}\\
&+& \int_0^t e^{-\frac{\Phi(X(s))}{2} - \int_s^t e^{-\Phi(X(\tau))} \nu(V(\tau)) d\tau
} w\Gamma\left(\frac{h}{w},\frac{h}{w}\right)(s,X(s),V(s)) ds. \label{3}
\end{eqnarray}
Easily we can control the first line above by
\begin{equation}
(\ref{1})\leq e^{-{t\nu_0 e^{-|\Phi|_{\infty}}}}  ||h(0)||_{L^{\infty}_{x,v}}.\label{one}
\end{equation}
\newline Next we estimate (\ref{3}). We can bound the loss term in (\ref{3}) :
\begin{eqnarray*}
w(X(s),V(s)) \Gamma_- \left( \frac{h}{w} , \frac{h}{w} \right)(s,X(s),V(s))=  \int_{\mathbb{R}^d} \int_{\mathbb{S}^{d-1}} q(\omega, |u-V(s)|) \frac{e^{-\frac{|u|^2}{4}}}{w(X(s),u)} h(s,X(s),u) h(s,X(s),V(s))d\omega du\\
\leq
\left[ \iint q(\omega, |u-V(s)|) e^{-\frac{|u|^2}{4}} w^{-1}(X(s),u) du d\omega
\right] \times || h(s) ||_{L^{\infty}_{x,v}}^2 \leq \nu(V(s)) || h(s) ||_{L^{\infty}_{x,v}}^2, \ \ \ \ \ \ \ \ \ \ \ \ \ \ \
\end{eqnarray*}
and for the gain term
\begin{eqnarray*}
w(X(s),V(s))\Gamma_+ \left( \frac{h}{w} , \frac{h}{w} \right)(s,X(s),V(s))=w(X(s),V(s)) \int_{\mathbb{R}^d}\int_{\mathbb{S}^{d-1}} q(\omega, |u-V(s)|) e^{-\frac{|u|^2}{4}} \frac{h(s,X(s),u^{\prime})}{w(X(s),u^{\prime})} \frac{h(s,X(s),v^{\prime})}{w(X(s),v^{\prime})} d\omega du\\
\leq\Phi(X(s))^{-\beta /2}\int_{\mathbb{R}^d} \int_{\mathbb{S}^{d-1}} q(\omega,|u-V(s)|)e^{-\frac{|u|^2}{4}} d\omega du \times || h(s) ||_{L^{\infty}_{x,v}}^2 \leq \nu(V(s)) || h(s) ||_{L^{\infty}_{x,v}}^2, \ \ \ \ \ \ \ \ \ \ \ \ \ \ \ \ \ \ \ \ \
\end{eqnarray*}
where $v^{\prime}= v^{\prime}(u,V(s)) \ , \ u^{\prime} = u^{\prime}(u,V(s))$ and we used $|u|^2 + |V(s)|^2 = |u^{\prime}|^2 + |v^{\prime}|^2$ so that
\begin{eqnarray*}
w(X(s),u^{\prime}) w(X(s),v^{\prime}) = (\Phi(X(s))+\frac{|u^{\prime}|^2}{2})^{\beta /2} (\Phi(X(s))+\frac{|v^{\prime}|^2}{2})^{\beta/2}
 \geq \left\{ \Phi(X(s))^2 + \Phi(X(s))\left(\frac{|u^{\prime}|^2}{2}+ \frac{|v^{\prime}|^2}{2}\right)
\right\}^{\beta /2} \ \ \ \ \ \ \ \ \ \ \\
=\left\{ \Phi(X(s))^2 + \Phi(X(s))\left(\frac{|u|^2}{2} + \frac{|V(s)|^2}{2}\right)\right\}^{\beta/2} \geq\left\{ \Phi(X(s)) \left(\Phi(X(s))+\frac{|V(s)|^2}{2}\right)
\right\}^{\beta /2}
\geq \Phi(X(s))^{\beta/2} w(X(s),V(s)). \ \ \ \ \ \  \ \ \
\end{eqnarray*}
Note that
\begin{eqnarray*}
e^{-\frac{1}{2}\int_s^t e^{-\Phi(X(\tau))}\nu(V(\tau))d\tau} &\leq& e^{-\frac{1}{2} \nu_0 e^{-|\Phi|_{\infty}}(t-s)},\\
2\frac{d}{ds} \left\{ e^{-\frac{1}{2}\int_s^t e^{-\Phi(X(\tau))} \nu(V(\tau)) d\tau }
\right\} &=& \nu(V(s)) e^{-\Phi(X(s))} e^{-\frac{1}{2}\int_s^t e^{-\Phi(X(\tau))} \nu(V(\tau)) d\tau}.
\end{eqnarray*}
Using above relations, we have an upper bound of the integrand of (\ref{3}) as
\begin{eqnarray*}
&&e^{-\frac{\Phi(X(s))}{2}} e^{-\frac{1}{2}\int_s^t e^{-\Phi(X(\tau))}\nu(V(\tau))d\tau} e^{-\frac{1}{2} \nu_0 e^{-|\Phi|_{\infty}}(t-s)} \nu(V(s)) ||h(s)||_{\infty}^2\\
&& \leq \ \ e^{\frac{|\Phi|_{\infty}}{2}} \nu(V(s)) e^{-\Phi(X(s))} e^{-\frac{1}{2}\int_s^t e^{-\Phi(X(\tau))}\nu(V(\tau))d\tau} \times \{ e^{-\frac{1}{4}\nu_0 e^{-|\Phi|_{\infty}}(t-s) } ||h(s)||_{\infty}
\}^2\\
&& = \ \ e^{\frac{|\Phi|_{\infty}}{2}} \times 2\frac{d}{ds} \left\{ e^{-\frac{1}{2}\int_s^t e^{-\Phi(X(\tau))} \nu(V(\tau)) d\tau }
\right\}\times \{ e^{-\frac{1}{4}\nu_0 e^{-|\Phi|_{\infty}}(t-s) } ||h(s)||_{\infty}
\}^2.
\end{eqnarray*}
Therefore we have a control of the integration of (\ref{3}) by
\begin{eqnarray}
(\ref{3})&\leq&
2 e^{\frac{|\Phi|_{\infty}}{2}}
\{ 1- e^{-\frac{1}{2}\int_0^t e^{-\Phi(X(\tau))} \nu(V(\tau))d\tau}
\}  \times \sup_{0\leq s\leq t}\{ e^{-\frac{1}{4}\nu_0 e^{-|\Phi|_{\infty}}(t-s) } ||h(s)||_{\infty}
\}^2
\nonumber\\
&\leq& 2 e^{\frac{|\Phi|_{\infty}}{2}} \times \sup_{0\leq s\leq t}\{ e^{-\frac{1}{4}\nu_0 e^{-|\Phi|_{\infty}}(t-s) } ||h(s)||_{\infty}
\}^2.
\label{*1}
\end{eqnarray}

From now, we concentrate to estimate (\ref{2}). Let $\mathbf{k}(v,v^{\prime})$ be the corresponding kernel associated with $K$.
Notice that in the integrand of (\ref{2})
\begin{equation}
\{K_w h\}(s,X(s),V(s))= \int_{\mathbb{R}^d} \mathbf{k}_w (V(s),v^{\prime})\underline{h(s,X(s),v^{\prime})} dv^{\prime}.\label{sharp1}
\end{equation}
Now we use the representation of the underlined $\underline{h(s,X(s),v^{\prime})}$ again to evaluate (\ref{sharp1}).
We need a following crucial inequality, Lemma 3 in \cite{Guo08} :
\begin{lemma}\label{crucial}(\cite{Guo08})
\begin{equation*}
|\mathbf{k}(v,v^{\prime})| \leq C \{|v-v^{\prime}|+ |v-v^{\prime}|^{-1}\}\exp\left\{ -\frac{1}{8}|v-v^{\prime}|^2 -\frac{1}{8}\frac{||v|^2 - |v^{\prime}|^2|^2}{|v-v^{\prime}|^2}
\right\}.
\end{equation*}
Let $0\leq \theta < \frac{1}{4}$. Then there exists $0\leq \varepsilon(\theta)<1$ and $C_{\theta}>0$ such that for $0\leq \varepsilon < \varepsilon(\theta)$,
\begin{equation*}
\int_{\mathbb{R}^d} \{|v-v^{\prime}|+ |v-v^{\prime}|^{-1} \} \exp\left\{ -\frac{1-\varepsilon}{8}|v-v^{\prime}|^2 - \frac{1-\varepsilon}{8}\frac{||v|^2 -|v^{\prime}|^2|^2}{|v-v^{\prime}|^2}
\right\}\frac{w(x,v)e^{\theta |v|^2}}{w(x,v^{\prime})e^{\theta |v^{\prime}|^2}}
dv^{\prime} \leq \frac{C}{1+|v|}.
\end{equation*}
\end{lemma}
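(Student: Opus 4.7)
The statement splits into a pointwise bound on $\mathbf{k}(v,v')$ and a weighted integral bound, which require quite different techniques.

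For the pointwise bound I would follow Grad's classical argument. Writing $Lf = -\mu^{-1/2}\{Q(\mu,\sqrt{\mu}f)+Q(\sqrt{\mu}f,\mu)\}$, the kernel decomposes as $\mathbf{k}=-\mathbf{k}_1+\mathbf{k}_2$ with $\mathbf{k}_1$ coming from the loss part (which yields a fully Gaussian bound, stronger than claimed) and $\mathbf{k}_2$ coming from the gain part. For $\mathbf{k}_2$ the key step is the Carleman change of variables: rewriting the $\omega\in\mathbb{S}^{d-1}$, $u\in\mathbb{R}^d$ integration using polar coordinates aligned with the direction $v-v'$ reduces the $\mathbf{k}_2$ kernel to an integral over the hyperplane through $v'$ orthogonal to $v-v'$. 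Combining the Gaussian factors $\sqrt{\mu(u)\mu(u')}$ with the energy identity $|u|^2+|v|^2=|u'|^2+|v'|^2$ produces exactly the claimed exponent $-\tfrac{1}{8}|v-v'|^2 -\tfrac{1}{8}(|v|^2-|v'|^2)^2/|v-v'|^2$, while the hard-potential scaling $B\sim|v-u|^{\gamma}$ with $0<\gamma\leq 1$ together with the Jacobian yields the prefactor $|v-v'|+|v-v'|^{-1}$.

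For the weighted integral bound, the core algebraic step is to absorb the factor $e^{\theta(|v|^2-|v'|^2)}$ into the surviving Gaussian. Setting $r=|v-v'|$ and $s=(|v|^2-|v'|^2)/|v-v'|$, the relevant exponent reads
\begin{equation*}
\frac{1-\varepsilon}{8}(r^2+s^2)-\theta rs
=\frac{1-\varepsilon}{8}\Big(s-\tfrac{4\theta r}{1-\varepsilon}\Big)^2
+\Big(\tfrac{1-\varepsilon}{8}-\tfrac{2\theta^2}{1-\varepsilon}\Big)r^2,
\end{equation*}
which is a positive definite quadratic form in $(r,s)$ iff $(1-\varepsilon)^2>16\theta^2$, i.e.\ $\varepsilon<1-4\theta$. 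Since $\theta<1/4$, a valid $\varepsilon(\theta)$ exists, and the remaining Gaussian $\exp(-c_\theta(r^2+s^2))$ has a strictly positive coefficient $c_\theta$. The polynomial weight ratio $w(x,v)/w(x,v')\leq C(1+|v-v'|)^{\beta}$ (using $\Phi\geq 1$ and $|v|^2\leq 2|v-v'|^2+2|v'|^2$) is absorbed into this Gaussian at the cost of an arbitrarily small further decrease of $c_\theta$.

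It remains to extract the $1/(1+|v|)$ decay. Setting $u=v-v'$ and splitting $u=\alpha\hat v + u_\perp$ with $\alpha\in\mathbb{R}$, $u_\perp\perp v$, one has $|v|^2-|v'|^2 = 2|v|\alpha-|u|^2$. For $|v|$ large the factor $\exp(-c_\theta(|v|^2-|v'|^2)^2/|u|^2)$ concentrates $\alpha$ in an interval of width $\sim 1/|v|$ around $|u|^2/(2|v|)$, so integrating first in $\alpha$ contributes a factor $1/|v|$; integrating the remaining Gaussian over $u_\perp\in\mathbb{R}^{d-1}$ converges, and the regime $|v|\lesssim 1$ is immediate from the full Gaussian. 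The main obstacle is the Carleman-representation bookkeeping needed to arrive at the precise constant $\tfrac{1}{8}$ in the exponent; once that is secured the rest is a completion of squares plus the sharp-peak integration just described.
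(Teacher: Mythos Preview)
Your proposal is correct and follows the same route as the paper: the paper's own proof in fact only carries out explicitly the weight--ratio estimate $w(x,v)/w(x,v')\le C(1+|v-v'|^2)^{\beta/2}$ (using $\Phi\ge 1$, exactly as you do) and then defers the pointwise Grad bound, the completion of squares absorbing $e^{\theta(|v|^2-|v'|^2)}$, and the $1/(1+|v|)$ decay to Lemma~3 of \cite{Guo08}. Your sketch of those deferred steps is the standard argument from \cite{Guo08}, so nothing is missing.
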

\begin{proof}
We can check
\begin{eqnarray*}
\left|\frac{w(x,v)}{w(x,v^{\prime})}\right| &=& \left\{ \frac{\frac{|v|^2}{2} + \Phi(x)}{\frac{|v^{\prime}|^2}{2} + \Phi(x)} \right\}^{\beta /2}
\leq \left\{ 1+ \frac{\frac{|v^{\prime}|^2}{2} + \frac{|v-v^{\prime}|^2}{2}}{\Phi(x)+\frac{|v^{\prime}|^2}{2}}
\right\}^{\beta/2}
\leq \left\{1 + \frac{\frac{|v^{\prime}|^2}{2}}{\Phi(x)+ \frac{|v^{\prime}|^2}{2}} + \frac{\frac{|v-v^{\prime}|^2}{2}}{\Phi(x)+ \frac{|v^{\prime}|^2}{2}}\right\}^{\beta /2}\\
 &\leq& \{1+ 1 + C_{\Phi}|v-v^{\prime}|^2 \}^{\beta/2} \leq C(1+|v-v^{\prime}|^2)^{\beta/2}.
\end{eqnarray*}
The remainder of the proof is exactly same as the proof of Lemma 3 in \cite{Guo08}.
\end{proof}

In order to simplify notations, we define
\begin{eqnarray*}
\begin{cases}
&\Psi_1 (t) = \int_0^t e^{-\Phi(X(\tau;t,x,v))} \nu(V(\tau;t,x,v)) d\tau\\
&\Psi_2 (s,t) = \Phi(X(s;t,x,v))+  \int_s^t e^{-\Phi(X(\tau;t,x,v))} \nu(V(\tau;t,x,v)) d\tau \geq \nu_0 e^{-|\Phi|_{\infty}}(t-s)\\
&\Psi_3 (s,t)= \frac{1}{2}\Phi(X(s;t,x,v)) +  \int_s^t e^{-\Phi(X(\tau;t,x,v))} \nu(V(\tau;t,x,v)) d\tau\\
&\Psi_1^{\prime}(s)  = \int_0^s e^{-\Phi(X(\tau;s,X(s),v^{\prime}))} \nu(V(\tau;s,X(s),v^{\prime})) d\tau \geq \nu_0 e^{-|\Phi|_{\infty}} s\\
&\Psi_2^{\prime}(s_1 ,s)  =
\Phi(X(s_1;s,X(s),v^{\prime}))+ \int_{s_1}^s e^{-\Phi(X(\tau;s,X(s),v^{\prime}))}\nu(V(\tau;s,X(s),v^{\prime})) d\tau
\geq \nu_0 e^{-|\Phi|_{\infty}}(s-s_1)
\\
&\Psi_3^{\prime}(s_1 ,s) = \frac{1}{2}\Phi(X(s_1;s,X(s),v^{\prime}))+ \int_{s_1}^s e^{-\Phi(X(\tau;s,X(s),v^{\prime}))}\nu(V(\tau;s,X(s),v^{\prime})) d\tau
\end{cases}
\end{eqnarray*}
We can rewrite $\underline{h(s,X(s),v^{\prime})}$ in (\ref{sharp1}) as
\begin{eqnarray*}
e^{-\Psi_1^{\prime}(s)}h(0,X^{\prime}(0),V^{\prime}(0))
&+& \int_0^s e^{-\Psi_2^{\prime}(s_1,s)}\int_{\mathbb{R}^d} \mathbf{k}_w (V^{\prime}(s_1),v^{\prime\prime})h(s_1 ,X^{\prime}(s_1),v^{\prime\prime}) dv^{\prime\prime} ds_1\label{2''}\\
&+&\int_0^s e^{-\Psi_3^{\prime}(s_1,s)}w\Gamma\left(\frac{h}{w},\frac{h}{w}\right)(s_1,X^{\prime}(s_1),V^{\prime}(s_1))ds_1.\nonumber
\end{eqnarray*}
We plug the above formula into (\ref{sharp1}) and (\ref{2}) to have
\begin{eqnarray}
(\ref{2})&=&\int_0^t \int_{\mathbb{R}^d} e^{-\Psi_2 (t,s)} e^{-\Psi_1^{\prime}(s)}  \mathbf{k}_w (V(s),v^{\prime}) h_0 (X^{\prime}(0),V^{\prime}(0)) dv^{\prime} ds\label{1-1}\\
&+& \int_0^t \int_{\mathbb{R}^d} \int_0^s \int_{\mathbb{R}^d} e^{-\Psi_2 (t,s)} e^{-\Psi_2^{\prime}(s_1,s)} \mathbf{k}_w (V(s),v^{\prime})\mathbf{k}_w(V^{\prime}(s_1),v^{\prime\prime}) h(s_1,X^{\prime}(s_1),v^{\prime\prime})dv^{\prime\prime} ds_1 dv^{\prime} ds\label{1-2}\\
&+&\int_0^t \int_{\mathbb{R}^d} \int_0^s e^{-\Psi_2 (t,s)} e^{-\Psi_3^{\prime}(s_1,s)}\mathbf{k}_w(V(s),v^{\prime})w\Gamma\left(\frac{h}{w},\frac{h}{w}\right)(s_1,X^{\prime}(s_1),V^{\prime}(s_1))
ds_1 dv^{\prime} ds.\label{1-3}
\end{eqnarray}
For the first line, we have
\begin{eqnarray}
(\ref{1-1}) \leq \int_0^t e^{-t\nu_0 e^{-|\Phi|_{\infty}}} ||h_0||_{\infty} \int_{\mathbb{R}^d} \mathbf{k}_w (V(s),v^{\prime})dv^{\prime} ds
\leq \left( t e^{-\frac{t}{2}\nu_0 e^{-|\Phi|_{\infty}}}
\right)e^{-\frac{\nu_0 e^{-|\Phi|_{\infty}}}{2}t}
||h_0||_{\infty}\leq C e^{-\frac{\nu_0 e^{-|\Phi|_{\infty}}}{2}t}
||h_0||_{\infty},
\label{*2}
\end{eqnarray}
and the third line (\ref{1-3}) is bounded by
\begin{eqnarray}
\int_0^t  \int_{\mathbb{R}^d}  \int_0^s
e^{-\Phi(X(s))} \underbrace{e^{-\int_s^t e^{-\Phi(X(\tau))}\nu(V(\tau))d\tau}}_{\mathbf{I}}
\underbrace{e^{-\frac{1}{2}\Phi(X^{\prime}(s_1))} e^{-\int_{s_1}^s e^{-\Phi(X^{\prime}(\tau))}\nu(V^{\prime}(\tau))d\tau}\nu(V^{\prime}(s_1))}_{\mathbf{II}}\mathbf{k}_w (V(s),v^{\prime}) || h(s_1) ||_{\infty}^2 ds_1 dv^{\prime} ds\nonumber\\
\leq
{2}\ e^{\frac{|\Phi|_{\infty}}{2}} \int_0^t \ e^{-\frac{\nu_0 (t-s)}{2e^{|\Phi|_{\infty}}}} \int_{\mathbb{R}^d} \ \mathbf{k}_w (V(s),v^{\prime}) \int_0^s  \ \frac{d}{ds_1} \left\{e^{-\frac{1}{2}\int_{s_1}^s e^{-\Phi(X^{\prime}(\tau))}\nu(V^{\prime}(\tau))d\tau}\right\}ds_1dv^{\prime}ds
\sup_{0\leq s_1 \leq t} \{ e^{-\frac{\nu_0 (t-s_1)}{4e^{|\Phi|_{\infty}}}}||h(s_1)||_{\infty}
\}^2\nonumber\\
\leq
{2}e^{\frac{|\Phi|_{\infty}}{2}} \frac{2}{\nu_0}e^{|\Phi|_{\infty}} \times \sup_{0\leq s_1 \leq t} \{ e^{-\frac{\nu_0 e^{-|\Phi|_{\infty}}}{4}(t-s_1)}||h(s_1)||_{\infty}
\}^2 \ \ \leq \ \ \ \ \ \frac{4}{\nu_0} e^{\frac{3}{2}|\Phi|_{\infty}} \sup_{0\leq s_1 \leq t} \{ e^{-\frac{\nu_0 e^{-|\Phi|_{\infty}}}{4}(t-s_1)}||h(s_1)||_{\infty}
\}^2, \ \ \ \ \ \ \ \  \label{*3}
\end{eqnarray}
where we used
\begin{eqnarray*}
\mathbf{I} &\leq& e^{-{(t-s)\nu_0 e^{-|\Phi|_{\infty}}}},\\
\mathbf{II} &\leq&
2e^{\frac{|\Phi|_{\infty}}{2}}  e^{-\frac{\nu_0 e^{-|\Phi|_{\infty}}}{2}(s-s_1)}\times\frac{1}{2}e^{-\Phi(X^{\prime}(s_1))} e^{-\frac{1}{2}\int_{s_1}^s e^{-\Phi(X^{\prime}(\tau))}\nu(V^{\prime}(\tau))d\tau}   \nu(V^{\prime}(s_1))\\
&=&
2e^{\frac{|\Phi|_{\infty}}{2}} e^{-\frac{\nu_0 e^{-|\Phi|_{\infty}}}{2}(s-s_1)} \times \frac{d}{ds_1}\left\{e^{-\frac{1}{2}\int_{s_1}^s e^{-\Phi(X^{\prime}(\tau))}\nu(V^{\prime}(\tau))d\tau}\right\}.
\end{eqnarray*}
Now we concentrate on the second term, (\ref{1-2}).
\subsection{Estimate of (\ref{1-2})}
\textbf{CASE 1 : } $|v|\geq N$ with $N>> \sqrt{|\Phi|_{\infty}}$. Using (\ref{boundofV}), we have $|V(s)|\geq \frac{N}{2}$ so that
\begin{equation*}
\iint_{\mathbb{R}^d \times \mathbb{R}^d} \mathbf{k}_w (V(s),v^{\prime})\mathbf{k}_w (V^{\prime}(s_1),v^{\prime\prime}) dv^{\prime\prime} dv^{\prime} \leq \frac{C}{1+|V(s)|}\leq \frac{C}{N}.
\end{equation*}
Thus in this case, (\ref{1-2}) is bounded by
\begin{eqnarray}
\int_0^t  \int_0^s  e^{-\frac{\nu_0 e^{-|\Phi|_{\infty}}}{2}(t-s_1)} \iint_{\mathbb{R}^d \times \mathbb{R}^d } \mathbf{k}_w (V(s),v^{\prime}) \mathbf{k}_w (V^{\prime}(s_1),v^{\prime\prime}) dv^{\prime\prime}dv^{\prime}ds_1ds\times \sup_{0\leq s_1 \leq t} e^{-\frac{\nu_0 e^{-|\Phi|_{\infty}}}{2}(t-s_1)}||h(s_1)||_{\infty}\nonumber\\
\leq
\left(\frac{2}{\nu_0 e^{-|\Phi|_{\infty}}}\right)^2 \times \frac{C}{N} \times \sup_{0\leq s_1 \leq t} e^{-\frac{\nu_0 e^{-|\Phi|_{\infty}}}{2}(t-s_1)}||h(s_1)||_{\infty}\leq \frac{C}{N}\frac{4}{\nu_0^2}e^{2|\Phi|_{\infty}} \sup_{0\leq s_1 \leq t} e^{-\frac{\nu_0 e^{-|\Phi|_{\infty}}}{2}(t-s_1)}||h(s_1)||_{\infty},\label{*4}
\end{eqnarray}
where we used the fact
\begin{eqnarray*}
&&\int_0^t \int_0^s  e^{-\frac{\nu_0 e^{-|\Phi|_{\infty}}}{2}(t-s_1)}ds_1 ds\\
&& \ \ \ \ =\frac{2}{\nu_0 e^{-|\Phi|_{\infty}}} \int_0^t ds \int_0^{s}  \frac{d}{ds_1}\left\{ e^{-\frac{\nu_0 e^{-|\Phi|_{\infty}}}{2}(t-s_1)}\right\}ds_1= \frac{2}{\nu_0 e^{-|\Phi|_{\infty}}} e^{-\frac{\nu_0 e^{-|\Phi|_{\infty}}}{2}t} \int_0^t \{e^{\frac{\nu_0 e^{-|\Phi|_{\infty}}}{2}s}-1\}ds \\
&& \ \ \ \ =\frac{2}{\nu_0 e^{-|\Phi|_{\infty}}} e^{-\frac{\nu_0 e^{-|\Phi|_{\infty}}}{2}t}\left(
\frac{2}{\nu_0 e^{-|\Phi|_{\infty}}} e^{\frac{\nu_0 e^{-|\Phi|_{\infty}}}{2}t} -t - \frac{2}{\nu_0 e^{-|\Phi|_{\infty}}}
\right)\leq \left(\frac{2}{\nu_0 e^{-|\Phi|_{\infty}}}\right)^2.
\end{eqnarray*}
\textbf{CASE 2 : } $|v|\leq N, |v^{\prime}|\geq 2N,$ or $|v^{\prime}|\leq 2N, |v^{\prime\prime}|\geq 3N$. Observe that
\begin{eqnarray*}
|V(s)-v^{\prime}| &\geq& |v^{\prime}-v|-|V(s)-v|\geq |v^{\prime}|-|v|-|V(s)-v|,\\
|V^{\prime}(s_1)-v^{\prime\prime}|&\geq& |v^{\prime\prime}-v^{\prime}|-|V^{\prime}(s_1)-v^{\prime}|\geq |v^{\prime\prime}|-|v^{\prime}|-|V^{\prime}(s_1)-v^{\prime}|,
\end{eqnarray*}
 and $|V(s)-v|, |V^{\prime}(s_1)-v^{\prime}|\leq 2 |\Phi|_{\infty}^2$ from (\ref{boundofV}), thus we have either $|V(s)-v^{\prime}|\geq \frac{N}{2}$ or $|V^{\prime}(s_1)-v^{\prime\prime}|\geq \frac{N}{2}$ and either one of the followings are valid correspondingly for $\eta>0$:
\begin{eqnarray*}
\mathbf{k}_w (V(s),v^{\prime}) &\leq& e^{-\frac{\eta}{8}N^2} \mathbf{k}_w (V(s),v^{\prime}) e^{\frac{\eta}{8}|V(s)-v^{\prime}|^2},\\
\mathbf{k}_w (V^{\prime}(s_1),v^{\prime\prime}) &\leq& e^{-\frac{\eta}{8}N^2} \mathbf{k}_w (V^{\prime}(s_1),v^{\prime\prime}) e^{\frac{\eta}{8}|V^{\prime}(s_1)-v^{\prime\prime}|^2}.
\end{eqnarray*}
From Lemma \ref{crucial}, both $ \int  \mathbf{k}_w (V(s),v^{\prime}) e^{\frac{\eta}{8}|V(s)-v^{\prime}|^2}dv^{\prime}$ and $\int \mathbf{k}_w (V^{\prime}(s_1),v^{\prime\prime}) e^{\frac{\eta}{8}|V^{\prime}(s_1)-v^{\prime\prime}|^2} dv^{\prime\prime}$ are still finite for sufficiently small $\eta>0$. Therefore $(\ref{1-2})$ is bounded by
\begin{eqnarray}
 \int_0^t ds \int_0^s ds_1 e^{-\frac{\nu_0 e^{-|\Phi|_{\infty}}}{2}(t-s_1)} \iint dv^{\prime\prime}dv^{\prime} \mathbf{k}_w (V(s),v^{\prime}) \mathbf{k}_w (V^{\prime}(s_1),v^{\prime\prime}) \sup_{0\leq s_1 \leq t} e^{-\frac{\nu_0 e^{-|\Phi|_{\infty}}}{2}(t-s_1)}||h(s_1)||_{\infty} \ \ \ \ \ \ \ \ \ \ \ \nonumber\\
\leq
 \left(\frac{2}{\nu_0 e^{-|\Phi|_{\infty}}}\right)^2  e^{-\frac{\eta}{8}N^2} \sup_{0\leq s_1 \leq t} e^{-\frac{\nu_0 e^{-|\Phi|_{\infty}}}{2}(t-s_1)}||h(s_1)||_{\infty}
\leq
e^{-\frac{\eta}{8}N^2}\frac{4}{\nu_0^2}e^{2|\Phi|_{\infty}} \sup_{0\leq s_1 \leq t} e^{-\frac{\nu_0 e^{-|\Phi|_{\infty}}}{2}(t-s_1)}||h(s_1)||_{\infty},\label{*5}
\end{eqnarray}
where we used the fact
\begin{eqnarray*}
\iint  \mathbf{k}_w (V(s),v^{\prime}) \mathbf{k}_w (V^{\prime}(s_1),v^{\prime\prime})dv^{\prime\prime}dv^{\prime} \leq e^{-\frac{\eta}{8}N^2}\iint \mathbf{k}_w (V(s),v^{\prime}) \mathbf{k}_w (V^{\prime}(s_1),v^{\prime\prime})\{e^{\frac{\eta}{8}|V(s)-v^{\prime}|^2}+ e^{\frac{\eta}{8}|V^{\prime}(s_1)-v^{\prime\prime}|^2}\} \leq C e^{-\frac{\eta}{8}N^2}.
\end{eqnarray*}
\textbf{CASE 3 : } $|v|\leq N, |v^{\prime}|\leq 2N , |v^{\prime\prime}|\leq 3N$. This is the last remaining case because if $|v^{\prime}|> 2N$, it is included in Case 2; while if $|v^{\prime\prime}|> 3N$, either $|v^{\prime}|\leq 2N$ or $|v^{\prime}|\geq 2N$ are also included in Case 2.  We can bound $(\ref{1-2})$ by
\begin{eqnarray}
 \int_0^t
\int_{|v^{\prime}|\leq 2N}
\int_0^s
e^{-\nu_0 e^{-|\Phi|_{\infty}}(t-s_1)} \int_{|v^{\prime\prime}|\leq 3N}  \underbrace{\mathbf{k}_w (V(s),v^{\prime}) \mathbf{k}_w (V^{\prime}(s_1),v^{\prime\prime})}_{\bigodot}
 |h(s_1,X^{\prime}(s_1),v^{\prime\prime})| dv^{\prime\prime} ds_1 dv^{\prime} ds. \label{ooo}
\end{eqnarray}
Since $\mathbf{k}_w(v,v^{\prime})$ has possible integrable singularity of $\frac{1}{|v-v^{\prime}|}$, we can choose a smooth function with compact support $\mathbf{k}_N (v,v^{\prime})$ such that
\begin{equation*}
\sup_{|p|\leq 3N} \int_{|v^{\prime}|\leq 3N} |\mathbf{k}_w(p,v^{\prime})-\mathbf{k}_N (p,v^{\prime})| dv^{\prime} \leq \frac{1}{N}.
\end{equation*}
Splitting $\mathbf{k}_w (V(s),v^{\prime}) \mathbf{k}_w (V^{\prime}(s_1),v^{\prime\prime})$ in $\bigodot$ by
\begin{eqnarray}
\mathbf{k}_N(V(s),v^{\prime}) \mathbf{k}_N (V^{\prime}(s_1),v^{\prime\prime}) \ , \ \ \ \ \ \ \ \ \ \ \ \ \ \ \ \ \ \ \ \ \ \ \ \ \ \ \ \ \ \ \ \ \ \ \ \label{o1}\\
\{\mathbf{k}_w (V(s),v^{\prime})- \mathbf{k}_N (V(s),v^{\prime})\} \mathbf{k}_w (V^{\prime}(s_1),v^{\prime\prime})
+ \{\mathbf{k}_{w}(V^{\prime}(s_1),v^{\prime\prime})-\mathbf{k}_N (V^{\prime}(s_1),v^{\prime\prime})\}\mathbf{k}_N (V(s),v^{\prime}).\label{o2}
\end{eqnarray}
We can bound (\ref{ooo}), in the case of $\bigodot = (\ref{o2})$, by
\begin{equation}
\frac{C}{N}\frac{4}{\nu_0^2}e^{2|\Phi|_{\infty}} \sup_{0\leq s_1 \leq t} e^{-\frac{\nu_0 e^{-|\Phi|_{\infty}}}{2}(t-s_1)} || h(s_1) ||_{\infty}.\label{**1}
\end{equation}
In the case of $\bigodot = (\ref{o1})$, we can bound (\ref{ooo}) by
\begin{eqnarray}
{C_N}\int_0^t ds
\int_{|v^{\prime}|\leq 2N} dv^{\prime}
\int_0^s ds_1
e^{-{\nu_0 e^{-|\Phi|_{\infty}}}(t-s_1)}
 \int_{|v^{\prime\prime}|\leq 3N}
|h(s_1 ,X(s_1 ; s, X(s;t,x,v),v^{\prime}),v^{\prime\prime})|dv^{\prime\prime}.\label{main}
\end{eqnarray}
Recall that we need to show the decay for $t=T_0$ from (\ref{expdecayT}). Since the potential is time-independent we have
\begin{equation*}
X(s_1 ; s , X(s;T_0,x,v) , v^{\prime}) = X(s_1-s+T_0 ; T_0 , X(T_0 ;2T_0 -s ,x,v),v^{\prime}),
\end{equation*}
for $0\leq s_1 \leq s \leq T_0$. From Lemma \ref{epsilonneigh}, we split $(\ref{main})$ by
\begin{eqnarray}
{C_N} \sum_{i^1}^{M^1} \sum_{I^2}^{(M^2)^d}\sum_{I^3}^{(M^3)^d} \int_0^{T_0} ds \  \int_0^{s} ds_1 \mathbf{1}_{\{X(s_1-s+T_0 ;T_0,x,v)\in \mathfrak{D}_{I^2}^2\}}(s_1,s)\ \underbrace{\mathbf{1}_{\mathfrak{D}_{i^1}^1}(s_1-s+T_0)}_{\bigotimes} e^{-{\nu_0 e^{-|\Phi|_{\infty}}}(T_0-s_1)} \ \nonumber\\
\times\int_{|v^{\prime}|\leq 2N} dv^{\prime} \mathbf{1}_{\mathfrak{D}_{I^3 }^3}(v^{\prime}) \int_{|v^{\prime\prime}|\leq 3N} |h(s_1,
X(s_1-s+T_0 ; T_0 , X(T_0 ;2T_0 -s ,x,v),v^{\prime})
,v^{\prime\prime})|
dv^{\prime\prime}.\label{oo}
\end{eqnarray}
From Lemma \ref{epsilonneigh}, we have
\begin{eqnarray*}
\{(s_1-s+T_0, X(T_0 ;2T_0 -s ,x,v),v^{\prime})\in \mathfrak{D}_{i^1}^1\times \mathfrak{D}^2_{I^2 } \times \mathfrak{D}^3_{I^3 }  : det\left(\frac{\partial X}{\partial v^{\prime}}\right)(s_1-s+T_0 ; T_0 , X(T_0 ;2T_0 -s ,x,v),v^{\prime})=0
\}\\
\subset
\bigcup_j^d
\{(s_1-s+T_0, X(T_0 ;2T_0 -s ,x,v),v^{\prime})\in \mathfrak{D}_{i^1}^1\times \mathfrak{D}^2_{I^2 } \times \mathfrak{D}^3_{I^3 } \ : \ s_1-s +T_0 \in (t_{j ,i^1, I^2 , I^3}-\frac{\varepsilon}{4 M^1}, t_{j ,i^1, I^2, I^3 }+\frac{\varepsilon}{4 M^1})
\}.
\end{eqnarray*}
For each $i^1, I^2 ,I^3 $ and $j$, we split $\bigotimes$ as
\begin{eqnarray}
\mathbf{1}_{\mathfrak{D}_{i^1}^1}(s_1-s+T_0)\mathbf{1}_{(t_{j,i^1, I^2, I^3}- \frac{\varepsilon}{4 M^1}, t_{j,i^1, I^2, I^3}+ \frac{\varepsilon}{4 M^1} )}( s_1-s+T_0)  \ , \ \ \ \ \ \label{1-a}\\
 \mathbf{1}_{\mathfrak{D}_{i^1}^1}(s_1-s + T_0)\{1- \mathbf{1}_{(t_{j,i^1, I^2, I^3}- \frac{\varepsilon}{4 M^1}, t_{j,i^1, I^2, I^3}+ \frac{\varepsilon}{4 M^1} )}( s_1-s+T_0)\}.\label{1-b}
\end{eqnarray}
\textbf{CASE 3a : } In the case of $\bigotimes =(\ref{1-a})$, the integration (\ref{oo}) is bounded by
\begin{eqnarray*}
{C_N}\sum_{i^1 }^{M^1} \sum_{I^2}^{(M^2)^d} \sum_{I^3}^{(M^3)^d} \int_0^{T_0} ds
\int_0^{s} ds_1 \mathbf{1}_{\{X(s_1-s+T_0;T_0,x,v)\in \mathfrak{D}_{I^2}^2\}}(s_1,s)\ \mathbf{1}_{\mathfrak{D}_{i^1}^1}(s_1 -s+T_0) \underline{e^{-\nu_0 e^{-|\Phi|_{\infty}}(T_0-s_1)}}_{*}\ \ \ \ \ \ \ \ \ \ \ \ \ \  \\ \times \mathbf{1}_{(t_{j,i^1, I^2, I^3}- \frac{\varepsilon}{4 M^1}, t_{j,i^1, I^2, I^3}+ \frac{\varepsilon}{4 M^1} )}( s_1-s+T_0) \
 \ \ \ \ \ \ \ \ \ \ \ \ \ \ \ \ \ \ \ \ \ \ \ \ \ \ \ \ \ \ \ \ \ \ \ \ \ \ \ \ \ \ \
\\
\times\int_{|v^{\prime}|\leq 2N} dv^{\prime} \mathbf{1}_{\mathfrak{D}_{I^3 }^3}(v^{\prime}) \int_{|v^{\prime\prime}|\leq 3N} |h(s_1,
X(s_1-s+T_0 ; T_0 , X(T_0 ;2T_0 -s ,x,v),v^{\prime})
,v^{\prime\prime})|
dv^{\prime\prime}.
\end{eqnarray*}
We split
\begin{eqnarray*}
\underline{e^{-{\nu_0 e^{-|\Phi|_{\infty}}}(t-s_1)}}_{*} = e^{-{\nu_0 e^{-|\Phi|_{\infty}}}{2}(t-s)} e^{-\frac{\nu_0 e^{-|\Phi|_{\infty}}}{2}(s-s_1)} \times e^{-\frac{\nu_0 e^{-|\Phi|_{\infty}}}{2}(t-s_1)}.
\end{eqnarray*}
and rewrite the above integration as
\begin{eqnarray*}
{C_N}\sum_{i^1 }^{M^1} \sum_{I^2}^{(M^2)^d} \sum_{I^3}^{(M^3)^d} \int_0^{T_0} ds \mathbf{1}_{\{X(s_1-s+T_0;T_0,x,v)\in \mathfrak{D}_{I^2}^2\}}(s_1,s)e^{-\frac{\nu_0 e^{-|\Phi|_{\infty}}}{2}(T_0-s)} \ \ \ \ \ \ \ \ \ \ \ \ \ \  \ \ \ \ \ \ \ \ \ \ \ \ \ \ \ \ \ \ \ \ \ \ \ \ \ \ \ \ \ \ \ \ \ \ \ \ \ \ \ \ \ \ \ \ \ \ \ \ \ \ \ \ \ \ \ \ \ \ \ \ \ \ \ \ \ \\
\ \ \ \ \ \ \ \ \ \ \ \ \ \ \ \ \ \ \ \ \ \ \ \ \ \ \times \int_0^{s} ds_1 e^{-\frac{\nu_0 e^{-|\Phi|_{\infty}}}{2}(s-s_1)} \ \mathbf{1}_{\mathfrak{D}_{i^1}^1}(s_1 -s+T_0)
\mathbf{1}_{(t_{j,i^1, I^2, I^3}- \frac{\varepsilon}{4 M^1}, t_{j,i^1, I^2, I^3}+ \frac{\varepsilon}{4 M^1} )}( s_1-s+T_0) \
 \ \ \ \ \ \ \ \ \ \ \ \ \ \ \ \ \ \ \ \ \ \ \ \ \ \ \ \ \ \ \ \ \ \ \ \ \ \ \ \ \ \ \ \label{ES2}
\\
\times\int_{|v^{\prime}|\leq 2N} dv^{\prime} \mathbf{1}_{\mathfrak{D}_{I^3 }^3}(v^{\prime}) \int_{|v^{\prime\prime}|\leq 3N} e^{-\frac{\nu_0 e^{-|\Phi|_{\infty}}}{2}(T_0-s_1)} ||h(s_1)||_{\infty}
dv^{\prime\prime}. \ \ \ \ \ \ \ \ \ \ \ \ \ \ \ \ \ \ \ \ \ \ \ \ \ \ \ \ \ \ \ \ \ \ \ \ \ \ \ \ \ \ \ \ \ \ \ \ \ \ \ \
\ \ \ \ \ \ \ \ \ \ \ \ \ \ \ \ \ \ \ \ \ \ \ \ \
   \label{ES3}
\end{eqnarray*}
For fixed $i^1 , I^2 , I^3$, using the fact that $e^{-\frac{\nu_0 e^{-|\Phi|_{\infty}}}{2}(s-s_1)}$ is an increasing function of $s_1 \in [0,s]$, the second line of the above term is bounded by
\begin{eqnarray*}
 \int_{s-\frac{ \varepsilon}{4M^1}}^s \frac{2}{\nu_0 e^{-|\Phi|_{\infty}}} \frac{d}{ds_1}\left\{e^{-\frac{\nu_0 e^{-|\Phi|_{\infty}}}{2}(s-s_1)}\right\} ds_1
= \frac{2}{\nu_0 e^{-|\Phi|_{\infty}}} \{1-e^{-\frac{\nu_0 e^{-|\Phi|_{\infty}}}{2}\frac{\varepsilon}{4M^1}}\} \sim \frac{2}{\nu_0 e^{-|\Phi|_{\infty}}} \frac{\nu_0 e^{-|\Phi|_{\infty}}}{2}\frac{ \varepsilon}{4M^1} =\frac{\varepsilon}{4M^1}.
\end{eqnarray*}
Therefore we can bound $(\ref{oo})$ by
\begin{eqnarray}
&&C_N \int_0^{T_0}  \underline{\sum_{I^2 } \mathbf{1}_{\{X(s_1-s+T_0;T_0,x,v)\in \mathfrak{D}_{i^2}^2\}}(s_1,s)}_{**} e^{-\frac{\nu_0 e^{-|\Phi|_{\infty}}}{2}(T_0-s)}ds \times \sum_{i^1 =1}^{M^1}\frac{\varepsilon}{4M^1}\nonumber\\
&& \ \ \ \ \ \ \ \times \int_{|v^{\prime}|\leq 2N} dv^{\prime} \underline{\sum_{I^3} \mathbf{1}_{\mathfrak{D}_{I^3 }^3}(v^{\prime})}_{***} \times(3N)^3 \sup_{0\leq s_1 \leq T_0}e^{-\frac{\nu_0 e^{-|\Phi|_{\infty}}}{2}(T_0-s_1)}|| h(s_1) ||_{\infty}\nonumber\\
&& \ \ \ \ \leq \ C_N \int_0^{T_0}  e^{-\frac{\nu_0 e^{-|\Phi|_{\infty}}}{2}(T_0-s)}ds \times \frac{\varepsilon}{4}
\int_{|v^{\prime}|\leq 2N} dv^{\prime}  (3N)^3 \sup_{0\leq s_1 \leq T_0}e^{-\frac{\nu_0 e^{-|\Phi|_{\infty}}}{2}(T_0-s_1)}|| h(s_1) ||_{\infty}\nonumber\\
&& \ \ \ \ \leq \ C_N \frac{2}{\nu_0 e^{-|\Phi|_{\infty}}}\frac{\varepsilon}{4} (2N)^3 (3N)^3 \sup_{0\leq s_1 \leq T_0} e^{-\frac{\nu_0 e^{-|\Phi|_{\infty}}}{2}(T_0-s_1)}||h(s_1)||_{\infty}\nonumber\\
&& \ \ \ \ \leq \
\varepsilon \frac{C_N e^{|\Phi|_{\infty}}}{\nu_0} \sup_{0\leq s_1 \leq T_0} e^{-\frac{\nu_0 e^{-|\Phi|_{\infty}}}{2}(T_0-s_1)}||h(s_1)||_{\infty},\label{*6}
\end{eqnarray}
where we used the fact that
\begin{eqnarray*}
&\underline{**}& \ \ \ \ \  \sum_{I^2}^{(M^2)^d} \mathbf{1}_{\{X(s_1-s+T_0;T_0 ,x,v)\in \mathfrak{D}_{I^2 }^2\}}(s_1,s) = \mathbf{1}_{\{X(s_1-s+T_0;T_0 ,x,v) \in \mathbb{T}^d\}}(s_1,s) =\mathbf{1}_{\{0\leq s \leq T_0\}}(s) \mathbf{1}_{\{0\leq s_1 \leq s\}}(s_1),\\
&\underline{***}& \ \ \ \ \ \sum_{I^3}^{(M^3)^d} \mathbf{1}_{\mathfrak{D}_{I^3}^3}(v^{\prime}) \mathbf{1}_{|v^{\prime}|\leq 2N}(v^{\prime}) = \mathbf{1}_{|v^{\prime}|\leq 2N} (v^{\prime}).
\end{eqnarray*}
\textbf{CASE 3b : } In the case of $\bigotimes= (\ref{1-b})$, the integration (\ref{oo}) is bounded by
\begin{eqnarray}
{C_N}\sum_{i^1 }^{M^1} \sum_{I^2}^{(M^2)^d} \sum_{I^3}^{(M^3)^d} \int_0^{T_0} ds
\int_0^{s} ds_1 \mathbf{1}_{\{X(s_1-s+T_0;T_0,x,v)\in \mathfrak{D}_{I^2}^2\}}(s_1,s)\ \mathbf{1}_{\mathfrak{D}_{i^1}^1}(s_1 -s+T_0) e^{-\nu_0 e^{-|\Phi|_{\infty}}(T_0-s_1)}\ \ \ \ \ \ \ \ \ \ \ \ \ \  \nonumber\\
 \times \bigg\{1-\mathbf{1}_{(t_{j,i^1, I^2, I^3}- \frac{\varepsilon}{4 M^1}, t_{j,i^1, I^2, I^3}+ \frac{\varepsilon}{4 M^1} )}( s_1-s+T_0) \bigg\} \
 \ \ \ \ \ \ \ \ \ \ \ \ \ \ \ \ \ \ \ \ \ \ \ \ \ \ \ \ \ \ \ \ \ \ \ \ \ \label{crucial line}
\\
\times\int_{|v^{\prime}|\leq 2N} \mathbf{1}_{\mathfrak{D}_{I^3 }^3}(v^{\prime}) \int_{|v^{\prime\prime}|\leq 3N} |h(s_1,
\underline{X(s_1-s+T_0 ; T_0 , X(T_0 ;2T_0 -s ,x,v),v^{\prime})}
,v^{\prime\prime})|dv^{\prime}
dv^{\prime\prime}.\nonumber
\end{eqnarray}
By Lemma \ref{epsilonneigh}, we can apply a change of variables :
\begin{equation}
v^{\prime} \rightarrow y\equiv X(s_1-s+T_0 ; T_0 , X(T_0 ;2T_0 -s ,x,v),v^{\prime}),\nonumber
\end{equation}
\begin{equation*}
Jac \left(\frac{\partial X}{\partial v^{\prime}}\right) (s_1-s+T_0 ; T_0 , X(T_0 ;2T_0 -s ,x,v),v^{\prime}) > \delta_*.
\end{equation*}
Therefore the last line of the above term is bounded by
\begin{eqnarray*}
\frac{1}{\delta_*} \int_{x\in \mathbb{T}^d} \int_{|v^{\prime\prime}|\leq 3N}  w(x,v^{\prime\prime}) |f(s_1 ,x,v^{\prime\prime})| dv^{\prime\prime} dx  &\leq&
\frac{1}{\delta_*} \left( \int_{|v^{\prime\prime}|\leq 3N}\int_{\mathbb{T}^d} w(x,v^{\prime\prime})^2 dx dv^{\prime\prime}
\right)^{\frac{1}{2}} ||f(s_1)||_{L^2}\\
&\leq& \ \frac{1}{\delta_*} C(N,|\Phi|_{\infty}) ||f(s_1)||_{L^2}.
\end{eqnarray*}
Therefore, in the case of $\bigotimes=(\ref{1-b})$, we have an upper bound of (\ref{oo}) as
\begin{eqnarray}
C(M^1,M^2,M^3,\delta_*, N , |\Phi|_{\infty}, \nu_0) \int_0^{T_0} ||f(s_1)||_{L^2} ds_1.\label{*7}
\end{eqnarray}
\newline To summarize, let $\lambda = \frac{\nu_0 e^{-|\Phi|_{\infty}}}{4}$ and from (\ref{one}), (\ref{*1}), (\ref{*2}), (\ref{*3}), (\ref{*4}), (\ref{*5}), (\ref{**1}), (\ref{*6}) and (\ref{*7}) we conclude
\begin{eqnarray*}
||h(T_0)||_{\infty} &\leq& e^{-\lambda T_0} ||h_0||_{\infty} + C(\frac{1}{N}+\varepsilon + e^{-\frac{\eta}{8}N^2})\sup_{0\leq s\leq T_0} \{e^{-\lambda(T_0-s)}||h(s)||_{\infty}\}\\
&&+ \ C \sup_{0\leq s\leq T_0} \{e^{-\lambda(T_0-s)}||h(s)||_{\infty}\}^2
+ C_{T_0} \int_0^{T_0} ||f(s_1)||_{L^2} ds_1.
\end{eqnarray*}
Assume $\sup_{0\leq s\leq T_0} \{e^{\lambda s} ||h(s)||_{\infty}\}$ is sufficiently small. Choose sufficiently large $N>0$ and small $\varepsilon>0$ and small $||h_0||_{\infty}$. Then we conclude (\ref{expdecayT}).
\section{Nonlinear $L^{\infty}$ Stability}
In this section, we prove Theorem 2 and establish the nonlinear $L^{\infty}$ stability in (\ref{stability}). The following lemma, which has been established in \cite{Guo_short}, plays a crucial rule in the proof of the nonlinear stability (\ref{stability}) without the conservation of momentum.
\begin{lemma}(\cite{Guo_short})
Let $\mu_E(x,v)=\exp\{-\frac{|v|^2}{2}-\Phi(x)\}$. Assume $F$ satisfies the conservation of mass (\ref{masscons}), energy (\ref{energycons}) and the entropy inequality (\ref{entropy}). For $0<\delta<1$, we have
\begin{equation}
\iint |F(t)-\mu_E| \mathbf{1}_{|F(t)-\mu_E|\geq \delta \mu_E}
\leq \frac{4}{\delta}\{\mathcal{H}(F_0)-\mathcal{H}(\mu_E) + |M_0| + |E_0|\}.
\label{entropyineq}
\end{equation}
\end{lemma}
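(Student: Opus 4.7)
The plan is to reduce (\ref{entropyineq}) to a one-variable convexity inequality for the ratio $u = F/\mu_E$, integrate it, and then combine the excess entropy inequality (\ref{entropy}) with the mass and energy conservations (\ref{masscons})--(\ref{energycons}). The crucial pointwise fact I would isolate first is
\begin{equation*}
|F - \mu_E| \ \leq\ \frac{4}{\delta}\bigl\{F\ln(F/\mu_E) - F + \mu_E\bigr\} \quad \text{on the set } \{|F - \mu_E| \geq \delta\mu_E\}.
\end{equation*}

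Writing $u = F/\mu_E \geq 0$ and $\phi(u) = u\ln u - u + 1$, this is equivalent to $|u-1| \leq (4/\delta)\phi(u)$ whenever $|u-1|\geq \delta$. Since $\phi \geq 0$ is strictly convex with $\phi(1) = \phi'(1) = 0$ and $\phi''(1) = 1$, a Taylor expansion at $u=1$ gives $\phi(1\pm\delta) \geq \delta^2/2 + O(\delta^3)$, so the ratio $|u-1|/\phi(u)$ at the boundary points $u = 1\pm\delta$ is comparable to $2/\delta$. For $u > 1+\delta$ the superlinear growth $\phi(u)\sim u\ln u$ forces the ratio to decrease, while for $0\leq u \leq 1-\delta$ the ratio is controlled by $|0-1|/\phi(0) = 1 \leq 4/\delta$ since $\delta < 1$. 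Hence $4/\delta$ serves as a uniform (non-sharp) upper bound on the whole exclusion set.

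With the pointwise inequality in hand, I would integrate over $\mathbb{T}^d \times \mathbb{R}^d$ and drop the indicator on the right by non-negativity of the relative entropy density $\mu_E\phi(F/\mu_E)$. Expanding
\begin{equation*}
F\ln(F/\mu_E) - F + \mu_E = F\ln F - \mu_E\ln\mu_E - (1+\ln\mu_E)(F-\mu_E),
\end{equation*}
and inserting $\ln\mu_E(x,v) = -\tfrac{|v|^2}{2} - \Phi(x)$, the cross term integrates by (\ref{masscons}) and (\ref{energycons}) to exactly $M_0 - E_0$. Applying (\ref{entropy}) to the $F\ln F - \mu_E\ln\mu_E$ piece then gives
\begin{equation*}
\iint\bigl\{F\ln(F/\mu_E) - F + \mu_E\bigr\}\, dv dx \ \leq\ \{\mathcal{H}(F_0) - \mathcal{H}(\mu_E)\} - M_0 + E_0,
\end{equation*}
and bounding $-M_0 + E_0 \leq |M_0| + |E_0|$ and multiplying by $4/\delta$ yields (\ref{entropyineq}).

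The substantive step is the pointwise convexity bound together with its explicit constant; everything else is algebraic bookkeeping of the conservation laws. The main obstacle lies in the asymptotic regimes $u \to 0^+$ (where $\phi(u)\to 1$ while $|u-1|\to 1$) and $u \to \infty$ (where $\phi$ grows like $u\ln u$), since the quadratic Taylor bound near $u=1$ fails there; both are dispatched by direct inspection that relies on $\delta < 1$.
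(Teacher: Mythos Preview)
Your approach is essentially the paper's: both isolate the relative-entropy density $F\ln(F/\mu_E)-F+\mu_E$ (which the paper writes in Taylor-remainder form $\tfrac{(F-\mu_E)^2}{2\tilde F}$ with $\tilde F$ between $F$ and $\mu_E$), bound it below by $\tfrac{\delta}{4}|F-\mu_E|$ on the exclusion set, and then control its integral via the excess entropy plus the linear mass/energy terms coming from $1+\ln\mu_E$.

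One slip in your pointwise verification: on $0\le u\le 1-\delta$ the ratio $|u-1|/\phi(u)$ is \emph{increasing}, not bounded by its value at $u=0$. Indeed, with $g(u)=(1-u)/\phi(u)$ one computes that the sign of $g'$ on $(0,1)$ is the sign of $u-1-\ln u>0$, so the supremum on $[0,1-\delta]$ is $g(1-\delta)=\delta/\phi(1-\delta)\sim 2/\delta$, not $g(0)=1$. This is harmless for the conclusion since $\phi(1-\delta)\ge\delta^2/4$ for all $0<\delta<1$ (same second-derivative check as at $1+\delta$), so the bound $4/\delta$ still holds. Alternatively, in the paper's Taylor formulation one simply notes $\tilde F\le\max(F,\mu_E)\le\tfrac{2}{\delta}|F-\mu_E|$ on $\{|F-\mu_E|\ge\delta\mu_E\}$, which yields the constant without any monotonicity analysis.
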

\begin{proof}
The proof is almost same as the argument in Page 147 of \cite{Guo_short}. The difference is the fact that $\ln \mu_E = -\frac{|v|^2}{2} -\Phi(x)$ is only bounded by the energy $\frac{|v|^2}{2}+\Phi(x)$. We now make use of the entropy inequality (\ref{entropy}). Recall from the Taylor expansion,
\begin{eqnarray*}
\mathcal{H}(F(t))-\mathcal{H}(\mu_E)
&=& \iint \{F(t)\ln F(t) - \mu_E \ln \mu_E\}
= \underline{\iint (\ln \mu_E +1) \{F(t)-\mu_E\} } +\iint\frac{\{F(t)-\mu_E\}^2}{2\tilde{F}}\\
&\leq&  \mathcal{H}(F_0)-\mathcal{H}(\mu_E) \ ,
\end{eqnarray*}
where $\tilde{F}$ is a number between $F(t)$ and $\mu_E$. Notice that the underlined term is bounded by the mass and energy of $F(t)$. Hence, from the conservation of mass (\ref{masscons}) and energy (\ref{energycons}), we get
\begin{equation*}
\iint \frac{\{F(t)-\mu_E\}^2}{2\tilde{F}} \leq \
\mathcal{H}(F_0) -\mathcal{H}(\mu_E) \ + \ |M_0| \ + \ |E_0|.
\end{equation*}
The rest of the proof is exactly same as the argument of Page 147 of \cite{Guo_short}.
\end{proof}

In order to obtain (\ref{stability}), we do estimate a weighted perturbation $h$ in (\ref{h}) satisfying the linearized Boltzmann equation (\ref{linearBE-h}). The proof is exactly same as Section 4 except \textbf{CASE 3b}. Consider (\ref{crucial line}) in \textbf{CASE 3b}. We introduce the indicator functions $\mathbf{1}_{|F(t)-\mu_E|\leq \delta \mu_E}$ and $\mathbf{1}_{|F(t)-\mu_E|\geq \delta \mu_E}$ and split the last line of (\ref{crucial line}) into
\begin{eqnarray}
\int_{|v^{\prime}|\leq 2N}\int_{|v^{\prime\prime}|\leq 3N} \mathbf{1}_{\mathfrak{D}_{I^3}^3} |h(s_1,X(s_1-s+T_0),v^{\prime\prime})| \mathbf{1}_{|F(t)-\mu_E|\leq \delta \mu_E}
+ \mathbf{1}_{\mathfrak{D}_{I^3}^3} |h(s_1,X(s_1-s+T_0),v^{\prime\prime})| \mathbf{1}_{|F(t)-\mu_E|\geq \delta \mu_E}.
\end{eqnarray}
The first integration is bounded by
\begin{eqnarray}
\delta \int_{|v^{\prime}|\leq 2N}\int_{|v^{\prime\prime}|\leq 3N} \mathbf{1}_{\mathfrak{D}_{I^3}^3} w(X(s_1-s+T_0),v^{\prime\prime}) \sqrt{\mu_E (X(s_1-s+T_0),v^{\prime\prime})}.\nonumber
\end{eqnarray}
Using Lemma 4, the second integration is bounded by
\begin{equation}
\int_{|v^{\prime}|\leq 2N} \int_{|v^{\prime\prime}|\leq 3N}
\mathbf{1}_{\mathfrak{D}_{I^3}^3} \frac{w}{\sqrt{\mu_E}}(X(s_1-s+T_0),v^{\prime\prime})
|F(s_1,X(s_1-s+T_0),v^{\prime\prime})-\mu_E(X(s_1-s+T_0),v^{\prime\prime})| \mathbf{1}_{|F(t)-\mu_E|\geq \delta \mu_E}
dv^{\prime\prime} dv^{\prime}.\label{aaa}
\end{equation}
By Lemma 2, we apply the change of variables
\begin{equation}
v^{\prime} \rightarrow y=X(s_1-s+T_0 ; T_0 , X(T_0 ;2T_0 -s ,x,v),v^{\prime}),\nonumber
\end{equation}
\begin{equation*}
Jac \left(\frac{\partial X}{\partial v^{\prime}}\right) (s_1-s+T_0 ; T_0 , X(T_0 ;2T_0 -s ,x,v),v^{\prime}) > \delta_*.
\end{equation*}
to bound (\ref{aaa}) by
\begin{eqnarray}
\frac{C_{N,\Phi}}{\delta_*} \int_{|v^{\prime}|\leq 2N} \int_{y\in\mathbb{T}^d} \mathbf{1}_{\mathfrak{D}_{I^3}^3} |F(s_1,y,v^{\prime\prime})-\mu_E(s_1,y,v^{\prime\prime})| \mathbf{1}_{|F(t)-\mu_E|\geq \delta\mu_E} dy dv^{\prime}.
\end{eqnarray}
Combining these two cases, using Lemma 4, the whole integration (\ref{crucial line}) is bounded by
\begin{eqnarray*}
&&C_{N,\Phi} \int_0^{T_0} ds \int_0^s ds_1 e^{-\nu_0 e^{-|\Phi|_{\infty}}(T_0-s_1)} \int_{|v^{\prime}|\leq 2N} \int_{\mathbb{T}^d} \{ \delta + \frac{1}{\delta_*} |F(s_1,y,v^{\prime\prime})-\mu_E(s_1,y,v^{\prime\prime})| \mathbf{1}_{|F(t)-\mu_E|\geq \delta\mu_E}
\}
dy dv^{\prime}\\
&& \ \ \ \leq \ C_{N,\Phi} \left[ \delta + \frac{1}{\delta_* \delta} {\{\mathcal{H}(F_0)-\mathcal{H}(\mu_E)+|M_0|+|E_0|\}}
\right] \ \leq \ C_{N,\Phi} \delta_*^{-\frac{1}{2}} \sqrt{\mathcal{H}(F_0)-\mathcal{H}(\mu_E)+|M_0|+|E_0|} \ .
\end{eqnarray*}
We also have optimized $\delta$ such that (for sufficiently small $|\mathcal{H}(F_0)-\mathcal{H}(\mu_E)|+|M_0|+|E_0|$),
\begin{equation*}
\delta= \frac{1}{\delta_* \delta} {\{\mathcal{H}(F_0)-\mathcal{H}(\mu_E)+|M_0|+|E_0|\}}.
\end{equation*}
To summarize, from the last part of Section 4, we conclude
\begin{eqnarray*}
\sup_{0\leq t\leq T_0}||h(t)||_{\infty} &\leq& e^{-\lambda T_0} ||h_0||_{\infty} + C(\frac{1}{N} + \varepsilon + e^{-\frac{\eta}{8}N^2}) \sup_{0\leq s\leq T_0} \{ e^{-\lambda(T_0 -s)}||h(s)||_{\infty}\}\\
&+& C \sup_{0\leq s\leq T_0} \{ e^{-\lambda(T_0 -s)}||h(s)||_{\infty}\}^2 + C_{N,\Phi} \delta_*^{-\frac{1}{2}} \sqrt{\mathcal{H}(F_0)-\mathcal{H}(\mu_E)+|M_0|+|E_0|}.
\end{eqnarray*}
Assume $\sup_{0\leq s\leq T_0} ||h(s)||_{\infty}$ and $\varepsilon>0$ sufficiently small and $T_0, N, \eta$ sufficiently large to conclude
\begin{equation}
|| h(T_0) ||_{\infty} \leq \frac{1}{2}||h_0||_{\infty} + C_{T_0} \sqrt{\mathcal{H}(F_0)-\mathcal{H}(\mu_E)+|M_0|+|E_0|}.\label{finite}
\end{equation}
From this finite time estimate, we use the argument in page 23 of \cite{E-G-M} to establish a large time estimate. Apply (\ref{finite}) repeatedly to get
\begin{eqnarray*}
||h(nT_0)||_{\infty} &\leq& \frac{1}{2}||h_0||_{\infty} + C_{T_0} \sqrt{\mathcal{H}(F_0)-\mathcal{H}(\mu_E)+|M_0|+|E_0|}\\
&\leq & \frac{1}{4}||h_0||_{\infty} + \{1+\frac{1}{2}\}C_{T_0} \sqrt{\mathcal{H}(F_0)-\mathcal{H}(\mu_E)+|M_0|+|E_0|}\\
&\leq& ...\\
&\leq& \frac{1}{2^n}||h_0||_{\infty} + \{1+\frac{1}{2}+ \frac{1}{4}+...\}C_{T_0} \sqrt{\mathcal{H}(F_0)-\mathcal{H}(\mu_E)+|M_0|+|E_0|}\\
&\leq& \frac{1}{2^n}||h_0||_{\infty} + 2C_{T_0} \sqrt{\mathcal{H}(F_0)-\mathcal{H}(\mu_E)+|M_0|+|E_0|}.
\end{eqnarray*}
For any $t>0$, we can find $n$ such that $nT_0 \leq t\leq \{n+1\}T_0$ and form $L^{\infty}$ estimate on $[0,T_0]$, we conclude (\ref{stability}) by
\begin{equation*}
||h(t)||_{\infty} \leq C_{T_0}||h(nT_0)||_{\infty} \leq C \left\{ \ ||h_0||_{\infty} + \sqrt{\mathcal{H}(F_0)-\mathcal{H}(\mu_E)+|M_0|+|E_0|} \ \right\}.
\end{equation*}
\\
\textbf{Acknowledgements. } The author thanks Raffaele Esposito and Rossana Marra for the reference of the unpublished \cite{Asano}. This research is supported in part by FRG07-57227.

\end{document}